\documentclass[12pt,twoside]{amsart}
\usepackage{amsmath}
\usepackage{amsthm}
\usepackage{amsfonts}
\usepackage{amssymb}
\usepackage{latexsym}
\usepackage{mathrsfs}
\usepackage{amsmath}
\usepackage{amsthm}
\usepackage{amsfonts}
\usepackage{amssymb}
\usepackage{latexsym}
\usepackage{geometry}
\usepackage{dsfont}
\usepackage[dvips]{graphicx}
\usepackage{color}
\usepackage[all]{xy}

\date{}
\pagestyle{plain}
\textheight= 22 true cm \textwidth =15 true cm
\allowdisplaybreaks[4] \footskip=15pt
\renewcommand{\uppercasenonmath}[1]{}

\topmargin=27pt \evensidemargin0pt \oddsidemargin0pt
\usepackage{graphicx,amssymb}
\usepackage[all]{xy}
\usepackage{amsmath}

\allowdisplaybreaks
\usepackage{amsthm}
\usepackage{color}

\theoremstyle{plain}
\newtheorem{theorem}{Theorem}[section]
\newtheorem{proposition}[theorem]{Proposition}
\newtheorem{lemma}[theorem]{Lemma}
\newtheorem{corollary}[theorem]{Corollary}
\newtheorem{example}[theorem]{Example}
\newtheorem*{open question}{Open Question}
\newtheorem{definition}[theorem]{Definition}

\theoremstyle{definition}
\newtheorem*{acknowledgement}{Acknowledgement}
\newtheorem*{theo}{Theorem}

\theoremstyle{remark}
\newtheorem{remark}[theorem]{Remark}

\newcommand{\Tor}{\mbox{\rm Tor}}

\def\p{\frak p}
\def\m{\frak m}

\def\Hom{{\rm Hom}}
\def\Ext{{\rm Ext}}
\def\Tor{{\rm Tor}}

\def\Ker{{\rm Ker}}

\def\Im{{\rm Im}}
\def\Coker{{\rm Coker}}

\def\Max{{\rm Max}}

\def\Spec{{\rm Spec}}
\def\Max{{\rm Max}}
\begin{document}
\begin{center}
{\large  \bf Uniformly $S$-Noetherian rings}

\vspace{0.5cm}   Wei Qi$^{a}$,\  Hwankoo Kim$^{b}$,\ Fanggui Wang$^{c}$,\ Mingzhao Chen$^{d}$,\ Wei Zhao$^{e}$

{\footnotesize a.\ School of Mathematics and Statistics, Shandong University of Technology, Zibo 255049, China\\
b.\ Division of Computer and Information Engineering, Hoseo University, Asan 31499, Republic of Korea\\
c.\  School of Mathematical Sciences, Sichuan Normal University, Chengdu 610068,  China\\
d.\ College of Mathematics and Information Science, Leshan Normal University, Leshan 614000, China\\
e.\ School of Mathematics, ABa Teachers University, Wenchuan 623002,  China

}
\end{center}

\bigskip
\centerline { \bf  Abstract}
\bigskip
\leftskip10truemm \rightskip10truemm \noindent

Let $R$ be a ring and $S$  a multiplicative subset of $R$. Then $R$ is called a uniformly $S$-Noetherian ($u$-$S$-Noetherian for abbreviation) ring  provided there exists an element $s\in S$  such that for any ideal $I$ of $R$, $sI \subseteq K$ for some finitely generated sub-ideal $K$ of $I$. We give the Eakin-Nagata-Formanek Theorem for $u$-$S$-Noetherian rings. Besides, the $u$-$S$-Noetherian properties on several  ring constructions are given. The notion of $u$-$S$-injective modules is also introduced and studied. Finally, we obtain the Cartan-Eilenberg-Bass Theorem for uniformly $S$-Noetherian rings.
\vbox to 0.3cm{}\\
{\it Key Words:} $u$-$S$-Noetherian rings, $S$-Noetherian rings, $u$-$S$-injective modules, ring constructions.\\
{\it 2010 Mathematics Subject Classification:}  13E05, 13A15.

\leftskip0truemm \rightskip0truemm
\bigskip

\section{Introduction}
Throughout this article, $R$ is always  a commutative ring with identity. For a subset $U$ of  an $R$-module $M$, we denote by $\langle U\rangle$ the submodule of $M$ generated by $U$. A subset $S$ of $R$ is called a multiplicative subset of $R$ if $1\in S$ and $s_1s_2\in S$ for any $s_1\in S$, $s_2\in S$.  Recall from  Anderson and Dumitrescu \cite{ad02} that a ring $R$ is called an \emph{$S$-Noetherian ring} if for any ideal $I$ of $R$, there is a finitely generated sub-ideal $K$ of $I$  such that $sI\subseteq K$ for some $s\in S$. Cohen's Theorem, Eakin-Nagata Theorem and Hilbert Basis Theorem for $S$-Noetherian rings are given in \cite{ad02}. Many algebraists  have paid considerable attention to the notion of $S$-Noetherian rings, especially in the $S$-Noetherian properties of ring constructions. In 2007, Liu \cite{l07} characterized a ring $R$ when the generalized power series ring $[[R^{M,\leq}]]$ is an $S$-Noetherian ring under some additional conditions.  In 2014, Lim and Oh \cite{lO14} obtained some $S$-Noetherian properties on amalgamated algebras along and ideal.  They \cite{lO15} also studied $S$-Noetherian properties on the composite semigroup rings and the composite generalized series rings next year. In 2016, Ahmed and Sana \cite{as16} gave an $S$-version of  Eakin-Nagata-Formanek Theorem for  $S$-Noetherian rings in the case where $S$ is finite. Very recently, Kim, Mahdou, and Zahir \cite{kmz21}  established a necessary and sufficient condition for a bi-amalgamation to inherit the $S$-Noetherian property. Some generalizations of $S$-Noetherian ring can be found in \cite{bh18,kkl14}.

However, in the definition of $S$-Noetherian rings, the choice of $s\in S$ such that $sI\subseteq K\subseteq I$ with $K$ finitely generated  is dependent on the ideal $I$. This dependence sets many obstacles to the further study of $S$-Noetherian rings. The main motivation of this article is to introduce and study a ``uniform''  version of  $S$-Noetherian rings. In fact, we say  a ring $R$ is \emph{uniformly $S$-Noetherian} ($u$-$S$-Noetherian for abbreviation) provided there exists an element $s\in S$ such that for any ideal $I$ of $R$, $sI \subseteq K$ for some finitely generated sub-ideal $K$ of $I$. Trivially, Noetherian rings are $u$-$S$-Noetherian, and $u$-$S$-Noetherian rings are $S$-Noetherian. Some counterexamples are given in Example \ref{exam-not-ut} and  Example \ref{exam-not-ut-1}. We also consider the notion of $u$-$S$-Noetherian  modules (see Definition \ref{us-no-module}), and then obtain the Eakin-Nagata-Formanek Theorem for $u$-$S$-Noetherian modules (see Theorem \ref{u-s-noe-char}) which generalizes some part of the result in \cite[Corollary 2.1]{as16}. The $S$-extension property of $S$-Noetherian modules  is given in Proposition \ref{s-u-noe-s-exact}. In Section $3$, we mainly consider  the $u$-$S$-Noetherian properties on some  ring constructions, including trivial extensions, pullbacks and amalgamated algebras along an ideal (see Proposition \ref{trivial extension-usn},  Proposition \ref{pullback-usn} and  Proposition \ref{amag-usn}). In Section $4$, we first introduce the notion of $u$-$S$-injective modules $E$ for which $\Hom_R(-,E)$ preserves $u$-$S$-exact sequences (see Definition \ref{u-S-tor-ext}), and then characterize  it by $u$-$S$-torsion properties of the ``Ext'' functor in Theorem \ref{s-inj-ext}. The Baer's Criterion for $u$-$S$-injective modules is given in Proposition \ref{s-inj-baer}. Finally, we obtain the Cartan-Eilenberg-Bass Theorem for uniformly $S$-Noetherian rings as follows (see Theorem \ref{s-injective-ext}):
\begin{theo}\label{s-injective-ext}
Let $R$ be a ring, $S$ a multiplicative subset of $R$ consisting of non-zero-divisors. Then the following assertions are equivalent:
\begin{enumerate}
\item  $R$ is  $u$-$S$-Noetherian;
\item  any direct sum of injective modules is $u$-$S$-injective;
\item  any direct union of injective modules is $u$-$S$-injective.
\end{enumerate}
\end{theo}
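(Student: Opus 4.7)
The plan is to establish the cycle $(3)\Rightarrow(2)\Rightarrow(1)\Rightarrow(3)$, with the $u$-$S$-version of Baer's criterion from Proposition~\ref{s-inj-baer}---a module $E$ is $u$-$S$-injective precisely when there is a single $s\in S$ such that, for every ideal $I$ of $R$ and every homomorphism $f\colon I\to E$, the scaled map $sf$ extends to a homomorphism $R\to E$---as the common workhorse.

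The implication $(3)\Rightarrow(2)$ is essentially formal: any $\bigoplus_{i\in\Lambda}E_i$ is the directed union of its finite subsums $\bigoplus_{i\in F}E_i$ with $F\subseteq\Lambda$ finite, and these subsums are injective since finite direct sums of injectives are injective. For $(1)\Rightarrow(3)$, fix $s\in S$ witnessing $u$-$S$-Noetherianity, let $E=\bigcup_{\lambda\in\Lambda}E_\lambda$ be a directed union of injectives, and take any $f\colon I\to E$. Using $u$-$S$-Noetherianity, pick a finitely generated sub-ideal $K=(x_1,\dots,x_n)\subseteq I$ with $sI\subseteq K$; by directedness, $f(x_1),\dots,f(x_n)$ all sit in a single $E_\mu$, so injectivity of $E_\mu$ extends $f|_K$ to some $g\colon R\to E_\mu\subseteq E$, and then $h\colon R\to E$ defined by $h(r)=g(sr)$ satisfies $h(x)=g(sx)=f(sx)=sf(x)$ for $x\in I$, as required.

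The heart of the argument is $(2)\Rightarrow(1)$, and the difficulty is extracting a single $s\in S$ that works uniformly for all ideals of $R$. The idea is to apply (2) to the ``universal'' direct sum $E=\bigoplus_{I\in\mathcal{I}}E(R/I)$ indexed by the set $\mathcal{I}$ of all ideals of $R$, and let $s$ be the resulting $u$-$S$-injectivity witness of $E$. Suppose for contradiction that this $s$ fails the $u$-$S$-Noetherian property on some ideal $J$: then for every finitely generated $K\subseteq J$ one has $sJ\not\subseteq K$, so one can build inductively a strictly ascending chain of finitely generated sub-ideals $0=J_0\subsetneq J_1\subsetneq\cdots$ of $J$ by picking $y_{n+1}\in J$ with $sy_{n+1}\notin J_n$ and setting $J_{n+1}=J_n+Ry_{n+1}$ (strictness follows from $sJ_n\subseteq J_n$). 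Let $J'=\bigcup_n J_n$, and define $f\colon J'\to E$ by placing $x+J_n\in R/J_n\hookrightarrow E(R/J_n)$ in the $J_n$-coordinate and zero elsewhere---this is well-defined into the direct sum because each $x\in J'$ eventually lies in $J_n$. The Baer criterion provides $g\colon R\to E$ extending $sf$; since $g(1)$ has finite support, for all sufficiently large $n$ one gets $s(x+J_n)=0$ in $R/J_n\hookrightarrow E(R/J_n)$, hence $sx\in J_n$, for every $x\in J'$; substituting $x=y_{N+2}$ and $n=N+1$ contradicts $sy_{N+2}\notin J_{N+1}$.

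The main obstacle is precisely this uniformity issue in $(2)\Rightarrow(1)$: a chain-specific direct sum would only yield a chain-specific $s$, which is too weak for $u$-$S$-Noetherianity, and the ``universal direct sum over all ideals'' trick is what lets one extract a single uniform $s$. The assumption that $S$ consists of non-zero-divisors is expected to enter only through Proposition~\ref{s-inj-baer} and Theorem~\ref{s-inj-ext}, which are the characterisations of $u$-$S$-injectivity on which the whole argument rests.
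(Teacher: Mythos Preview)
Your argument is essentially correct, but one point needs tightening. Your opening restatement of Proposition~\ref{s-inj-baer} as an unconditional ``precisely when'' is too strong: the converse direction there requires $E=sE$. This matters only in $(1)\Rightarrow(3)$, where you must verify that the directed union $E=\bigcup_\lambda E_\lambda$ is $S$-divisible before invoking Baer. It is: since $S$ consists of non-zero-divisors, each injective $E_\lambda$ is $S$-divisible (given $e\in E_\lambda$ and $s\in S$, the map $sr\mapsto re$ on $sR$ extends to $R$, giving $e\in sE_\lambda$), and $S$-divisibility passes to directed unions. With that check added, your $(1)\Rightarrow(3)$ is complete; your last paragraph correctly anticipates that regularity of $S$ enters exactly here.

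Your route differs from the paper's in two places. For $(1)\Rightarrow(3)$ the paper argues via the exact sequence $0\to I/K\to R/K\to R/I\to 0$, using that $\Ext_R^1(R/K,\varinjlim E_i)$ vanishes because $R/K$ is finitely presented; your argument is more elementary, extending $f|_K$ directly inside a single $E_\mu$ and then precomposing with multiplication by $s$. For $(2)\Rightarrow(1)$ the divergence is more significant. The paper first produces, for a given $s$, a chain $I_1\subset I_2\subset\cdots$ that is not $s$-stationary, forms the chain-specific direct sum $\bigoplus_i E(I/I_i)$, and then obtains from (2) an element $s\in S$ (the same letter) annihilating $\Ext_R^1(R/I,\bigoplus_i E(I/I_i))$; the contradiction relies on matching these two occurrences of $s$, which is delicate because the second depends on the direct sum, which depends on the chain, which depends on the first. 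Your device of fixing $s$ once and for all as the Baer witness for the single universal direct sum $\bigoplus_{I\in\mathcal I}E(R/I)$, and only afterwards building the chain $(J_n)$ tailored to that particular $s$, sidesteps this quantifier entanglement entirely and makes the uniformity in the conclusion transparent. The price is a much larger direct sum, but since (2) is assumed for arbitrary direct sums this is harmless.
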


\section{$u$-$S$-Noetherian rings and $u$-$S$-Noetherian modules}
Let $R$ be a ring and $S$ a multiplicative subset of $R$. Recall from \cite{ad02} that  $R$ is called an $S$-Noetherian ring (resp., $S$-PIR) provided  that for any ideal $I$ of $R$ there exists an element $s\in S$ and a finitely (resp., principally) generated sub-ideal $K$ of $I$ such that $sI\subseteq K$. Note that the choice of $s$ is decided by the ideal $I$. Now we introduce some  ``uniform''  versions of $S$-Noetherian rings and $S$-PIRs.

\begin{definition} Let $R$ be a ring and $S$ a multiplicative subset of $R$.
\begin{enumerate}
\item $R$ is called a $u$-$S$-Noetherian ring  provided there exists an element $s\in S$ such that for any ideal $I$ of $R$, $sI \subseteq K$ for some finitely generated sub-ideal $K$ of $I$.
\item  $R$ is called a $u$-$S$-Principal ideal ring $($$u$-$S$-PIR for short$)$ provided there exists an element $s\in S$ such that  for any ideal $I$ of $R$, $sI \subseteq (a)$ for some element $a\in I$.
\end{enumerate}
\end{definition}

If the element $s$ can be chosen to be  the identity in the definition of $u$-$S$-Noetherian rings, then $u$-$S$-Noetherian rings are exactly  Noetherian rings. Thus, every Noetherian ring is $u$-$S$-Noetherian. However, the converse does not hold generally.
\begin{example}\label{exam-not-ut}
Let $R=\prod\limits_{i=1}^{\infty}\mathbb{Z}_2$ be the countable  infinite direct product of $\mathbb{Z}_2$, then $R$ is not  Noetherian. Let $e_i$ be the element in $R$ with the $i$-th component $1$ and others $0$. Denote $S=\{1,e_i|i\geq 1\}$. Then $R$ is $u$-$S$-Noetherian. Indeed, let $I$ be an ideal of $R$. Then if all elements in $I$ have $1$-th components equal to $0$, we have $e_1I=0$. Otherwise  $e_1I=e_1R$. Thus $e_1I$ is principally generated. Consequently $R$ is a $u$-$S$-PIR, and so is $u$-$S$-Noetherian.
\end{example}

Let $R$ be a ring, $M$ an $R$-module and $S$ a multiplicative subset of $R$. For any $s\in S$, there is a  multiplicative subset $S_s=\{1,s,s^2,....\}$ of $S$. We denote by $M_s$ the localization of $M$ at $S_s$. Certainly, $M_s\cong M\otimes_RR_s$

\begin{lemma} \label{s-loc-u-noe}
 Let $R$ be a ring and $S$ a multiplicative subset of $R$. If $R$ is a $u$-$S$-Noetherian ring $($resp., $u$-$S$-PIR$)$, then there exists an element $s\in S$ such that $R_{s}$ is a Noetherian ring $($resp., PIR$)$.
\end{lemma}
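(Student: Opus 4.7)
The plan is to take the uniform witness $s \in S$ provided by the hypothesis and show directly that it does the job after localization, namely that every ideal of $R_s$ is finitely generated (resp., principal). The passage from $R$ to $R_s$ inverts $s$, which converts the multiplicative containment $sI \subseteq K \subseteq I$ into an equality $I_s = K_s$.

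More precisely, suppose $R$ is $u$-$S$-Noetherian and let $s \in S$ be such that every ideal $I$ of $R$ admits a finitely generated sub-ideal $K \subseteq I$ with $sI \subseteq K$. Let $\phi\colon R \to R_s$ denote the canonical map, and let $J$ be an arbitrary ideal of $R_s$. Set $I = \phi^{-1}(J)$. I will use the standard fact that for ideals in a localization, the extension of the contraction returns the original ideal, i.e.\ $J = I R_s = I_s$. By hypothesis, choose a finitely generated $K \subseteq I$ with $sI \subseteq K$. Localizing the inclusions $sI \subseteq K \subseteq I$ at $S_s$ yields $(sI)_s \subseteq K_s \subseteq I_s$. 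Since $s$ is a unit in $R_s$ we have $(sI)_s = I_s$, and hence $I_s = K_s$. Therefore $J = K_s$, which is finitely generated in $R_s$ by the images of any finite generating set of $K$. This shows $R_s$ is Noetherian.

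The $u$-$S$-PIR case is identical, replacing ``finitely generated'' by ``principally generated'': if $K = (a) \subseteq I$ with $sI \subseteq (a)$, then the same localization argument gives $J = I_s = (a/1)$, a principal ideal of $R_s$.

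I do not expect any substantive obstacle: the only point that needs care is the standard identity $\phi^{-1}(J)\cdot R_s = J$ and the observation that multiplication by $s$ is invertible in $R_s$, which together are exactly what make the ``uniform'' choice of $s$ strong enough to be converted into a genuine (non-$S$) Noetherian / principal ideal property in $R_s$.
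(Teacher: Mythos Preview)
Your proof is correct and follows essentially the same approach as the paper: pick the uniform witness $s$, write an arbitrary ideal $J$ of $R_s$ as $I_s$ for some ideal $I$ of $R$, apply the hypothesis to get $sI\subseteq K\subseteq I$, and localize so that invertibility of $s$ forces $J=I_s=K_s$. Your version is in fact a bit more carefully written (explicitly invoking the contraction--extension identity and the fact that $s$ is a unit in $R_s$), but the argument is the same.
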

\begin{proof} Since $R$ is $u$-$S$-Noetherian, there exists an element $s\in S$ satisfies that for any ideal $I$ of $R$ there is a finitely (resp., principally) generated sub-ideal $K$ of $I$ such that $sI\subseteq K$. Let $J$ be an ideal of  $R_{s}$. Then there exists an ideal $I'$ of $R$ such that $J= I'_s$, and hence $sI' \subseteq K'$ for some finitely (resp., principally) generated sub-ideal $K'$ of $I'$. So  $J= I'_s= K_s$ is finitely (resp., principally) generated ideal of $R_{s}$. Consequently, $R_{s}$ is a Noetherian  ring (resp., PIR).
\end{proof}

\begin{proposition} \label{s-loc-u-noe-fini}
Let $R$ be a ring and $S$ a multiplicative subset of $R$ consisting of finite  elements. Then $R$ is a $u$-$S$-Noetherian ring $($resp., $u$-$S$-PIR$)$ if and only if $R$ is an  $S$-Noetherian ring  $($resp.,  $S$-PIR$)$.
\end{proposition}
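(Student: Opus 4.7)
The forward direction is immediate from the definitions, since any $u$-$S$-Noetherian ring (resp., $u$-$S$-PIR) is in particular $S$-Noetherian (resp., $S$-PIR), regardless of whether $S$ is finite.

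For the converse, the plan is to use the multiplicative closure of $S$ to package the finitely many ``local witnesses'' into a single uniform element. Write $S=\{s_1,\dots,s_n\}$ (using that $S$ is finite) and set
\[
s \,=\, s_1 s_2 \cdots s_n \,\in\, S,
\]
which lies in $S$ since $S$ is multiplicatively closed. I claim this $s$ witnesses the $u$-$S$-Noetherian (resp.\ $u$-$S$-PIR) property.

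Given any ideal $I$ of $R$, the $S$-Noetherian (resp.\ $S$-PIR) hypothesis supplies some $s_j\in S$ and a finitely generated (resp.\ principal) sub-ideal $K\subseteq I$ with $s_j I\subseteq K$. Factor $s=s_j\, t$ where $t=\prod_{i\neq j}s_i\in R$. Then
\[
sI \,=\, t\,(s_j I) \,\subseteq\, tK \,\subseteq\, K,
\]
where the last inclusion uses that $K$ is an ideal of $R$, hence absorbs multiplication by $t$. In the PIR case, if $K=(a)$ with $a\in I$, then $sI\subseteq t(a)=(ta)\subseteq (a)$, so the uniform witness $s$ also produces a principal sub-ideal generated by the same element $a\in I$. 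Since $I$ was arbitrary, $s$ works uniformly for all ideals of $R$.

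There is no real obstacle: the only thing being exploited is that a finite product of the finitely many possible ``local'' witnesses is itself in $S$ and dominates each of them (in the divisibility sense within $S$), and that ideals absorb multiplication by ring elements. If one wanted a slicker presentation, one could instead invoke Lemma~\ref{s-loc-u-noe} together with the observation that when $S$ is finite, $R_s$ with $s=\prod_{i}s_i$ coincides with the full localization $R_S$, and then note that $R_S$ being Noetherian (resp.\ PIR) is exactly what $S$-Noetherianness (resp.\ $S$-PIR) delivers in the finite-$S$ setting; but the direct argument above already avoids any extra machinery.
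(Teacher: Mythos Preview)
Your main argument is correct and essentially identical to the paper's: both take $s=s_1\cdots s_n$ and use that $s$ is a multiple of whichever $s_j$ witnesses a given ideal, so $sI\subseteq s_jI\subseteq K$. One small caveat: your closing parenthetical about an alternative route via Lemma~\ref{s-loc-u-noe} is muddled, since that lemma goes from $u$-$S$-Noetherian to $R_s$ Noetherian rather than the reverse, so it does not actually furnish the converse direction you need; but this does not affect the validity of your direct proof.
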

\begin{proof}  Suppose  $R$ is a $u$-$S$-Noetherian ring $($resp., $u$-$S$-PIR$)$. Then trivially $R$ is an $S$-Noetherian ring (resp., $S$-PIR). Suppose $S=\{s_1,...,s_n\}$ and set $s=s_1\cdots s_n$. Suppose $R$ is an $S$-Noetherian ring (resp., $S$-PIR). Then for any ideal $I$ of $R$, there is a finitely (resp., principally) generated sub-ideal $J$ of $I$
such that $s_II\subseteq  J$ for some $s_I\in S$. Then $sI\subseteq s_II\subseteq J$. So $R$ is a $u$-$S$-Noetherian ring $($resp., $u$-$S$-PIR$)$.
\end{proof}
The following example shows $S$-Noetherian rings are not  $u$-$S$-Noetherian in general.
\begin{example}\label{exam-not-ut-1}
Let $R=k[x_1,x_2,....]$ be the countably infinite variables polynomial ring over a field $k$. Set $S=R-\{0\}$. Then $R$ is an  $S$-Noetherian ring. However, $R$ is not $u$-$S$-Noetherian.
\end{example}
\begin{proof}
Certainly, $R$ is an $S$-Noetherian ring. Indeed,  let $I$ be a non-zero ideal of $R$. Suppose $0\not=s\in I$. Then $sI\subseteq sR\subseteq I$. Thus $I$ is $S$-principally generated. So $R$ is an $S$-PIR and thus an  $S$-Noetherian ring.

We claim that $R$ is not $u$-$S$-Noetherian. Assume on the contrary that $R$ is $u$-$S$-Noetherian. Then $R_{s}$ is a Noetherian ring for some $s\in S$ by Lemma \ref{s-loc-u-noe}. If $n$ is the minimal number such that $x_m$ does not divide any monomial of $s$ for any $m\geq n$. Then $R_{s}\cong T[x_n,x_{n+1},....]$ where $T=k[x_1,x_2,....,x_{n-1}]_s$.  Obviously, $R_{s}\cong T[x_n,x_{n+1},....]$ is not Noetherian since the ideal generated by $\{x_n,x_{n+1},....\}$ is not a finitely generated ideal of  $T[x_n,x_{n+1},....]$.  So $R$ is not $u$-$S$-Noetherian.
\end{proof}

Recall from \cite{ad02} that an $R$-module $M$ is called an $S$-Noetherian module if every submodule of $M$ is $S$-finite, that is, for any submodule $N$ of $M$, there is an element $s\in S$ and a finitely generated $R$-module $F$ such that $sN\subseteq F\subseteq N$. Note that the choice of $s$ is decided by the submodule $N$. The rest of this section mainly studies a ``uniform'' version of $S$-Noetherian modules. Let $\{M_j\}_{j\in \Gamma}$  be a family of $R$-modules and $N_j$ a submodule of $M_j$ generated by $\{m_{i,j}\}_{i\in \Lambda_j}\subseteq M_j$ for each $j\in \Gamma$. Recall from \cite{z21} that  a family of $R$-modules  $\{M_j\}_{j\in \Gamma}$  is \emph{$u$-$S$-generated} (with respective to $s$) by  $\{\{m_{i,j}\}_{i\in \Lambda_j}\}_{j\in \Gamma}$ provided that there exists an element $s\in S$  such that $sM_j\subseteq N_j$ for each $j\in \Gamma$, where $N_j=\langle \{m_{i,j}\}_{i\in \Lambda_j}\rangle$.  We say a family of $R$-modules  $\{M_j\}_{j\in \Gamma}$ is \emph{$u$-$S$-finite} (with respective to $s$) if the set $\{m_{i,j}\}_{i\in \Lambda_j}$ can be chosen as a finite set for each $j\in \Gamma$.

\begin{definition}\label{us-no-module} Let $R$ be a ring and $S$ a multiplicative subset of $R$. An $R$-module $M$ is called a $u$-$S$-Noetherian $R$-module provided the set of all submodules of $M$ is $u$-$S$-finite.
\end{definition}

Let $R$ be a ring and $S$ a multiplicative subset of $R$. Recall from \cite{z21}, an $R$-module $T$ is called a \emph{$u$-$S$-torsion module}  provided that there exists an element $s\in S$ such that $sT=0$. Obviously,  $u$-$S$-torsion modules are $u$-$S$-Noetherian. A ring $R$ is $u$-$S$-Noetherian  if and only if it is $u$-$S$-Noetherian as an $R$-module. It is well known that an $R$-module $M$ is Noetherian if and only if $M$ satisfies ascending chain condition on submodules, if and only if $M$ satisfies the maximal condition (see \cite{n93}). In 2016, Ahmed et al. \cite{as16} obtained an $S$-version of this result provided  $S$ is a finite set and called it the $S$-version of Eakin-Nagata-Formanek Theorem. Next we will give a uniformly $S$-version of Eakin-Nagata-Formanek Theorem for any multiplicative subset $S$ of $R$.

First,  we recall from \cite[Definition 2.1]{as16} some modified notions of $S$-stationary  ascending chains of $R$-modules and  $S$-maximal elements of a family  of  $R$-modules. Let $R$ be a ring, $S$ a multiplicative subset of $R$  and $M$ an $R$-module. Denote by $M^{\bullet}$  an  ascending chain $M_1\subseteq M_2\subseteq ... $ of submodules of $M$. An ascending chain $M^{\bullet}$ is called \emph{stationary with respective to $s$} if there exists $k\geq 1$ such that $sM_n\subseteq M_k$ for any $n\geq k$.   Let $\{M_i\}_{i\in \Lambda}$ be a family of submodules of $M$. We say an $R$-module  $M_0\in \{M_i\}_{i\in \Lambda}$ is \emph{maximal with respective to $s$}  provided that if  $M_0\subseteq M_i$ for some $M_i\in \{M_i\}_{i\in \Lambda}$, then $sM_i\subseteq M_0$.

\begin{theorem} \label{u-s-noe-char} {\bf (Eakin-Nagata-Formanek Theorem for uniformly $S$-Noetherian rings)}
Let $R$ be a ring and $S$ a multiplicative subset of $R$. Let $M$ be an $R$-module.  Then the following assertions are equivalent:
\begin{enumerate}
\item $M$ is $u$-$S$-Noetherian;
\item there exists an element $s\in S$ such that any  ascending chain of submodules of $M$ is stationary with respective to $s$;

\item  there exists an element $s\in S$ such that any non-empty of submodules of $M$ has a maximal element with respective to $s$.
\end{enumerate}
\end{theorem}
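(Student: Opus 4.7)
The plan is to prove the equivalence via the cyclic implications $(1)\Rightarrow(2)\Rightarrow(3)\Rightarrow(1)$, verifying that the element $s\in S$ witnessing one condition serves as the witness for the next, so no power of $s$ needs to be taken.

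For $(1)\Rightarrow(2)$, let $s\in S$ witness the $u$-$S$-Noetherian property of $M$, and take an ascending chain $M_1\subseteq M_2\subseteq\cdots$ of submodules. Set $N=\bigcup_{n\geq 1}M_n$. Then $N$ is a submodule of $M$, so there is a finitely generated submodule $F\subseteq N$ with $sN\subseteq F$. Since $F$ has only finitely many generators, each lying in some $M_{i}$, one finds $k$ with $F\subseteq M_k$; thus $sM_n\subseteq sN\subseteq F\subseteq M_k$ for all $n\geq k$. For $(3)\Rightarrow(1)$, let $s$ be the witness for $(3)$ and let $N$ be any submodule of $M$. Apply $(3)$ to the non-empty family $\mathcal{F}$ of finitely generated submodules of $N$ to obtain $F_0\in\mathcal{F}$ which is maximal with respect to $s$. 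For every $n\in N$, the submodule $F_0+Rn$ is finitely generated and contained in $N$, so it lies in $\mathcal{F}$ and contains $F_0$; maximality gives $s(F_0+Rn)\subseteq F_0$, and in particular $sn\in F_0$. Hence $sN\subseteq F_0\subseteq N$ with $F_0$ finitely generated, showing $M$ is $u$-$S$-Noetherian.

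The delicate step is $(2)\Rightarrow(3)$, which I handle by contradiction. Suppose there is a non-empty family $\mathcal{F}$ of submodules of $M$ with no element maximal with respect to $s$. For every $N\in\mathcal{F}$, there then exists $N'\in\mathcal{F}$ with $N\subseteq N'$ and $sN'\not\subseteq N$; note that $N=N'$ would force the trivial inclusion $sN'\subseteq N$, so automatically $N\subsetneq N'$. Using this selection repeatedly (via dependent choice), I construct a strictly ascending chain $M_0\subsetneq M_1\subsetneq M_2\subsetneq\cdots$ in $\mathcal{F}$ satisfying $sM_{n+1}\not\subseteq M_n$ for every $n$. By $(2)$, this chain must be stationary with respect to $s$, so there is $k\geq 1$ with $sM_n\subseteq M_k$ for all $n\geq k$; specializing to $n=k+1$ gives $sM_{k+1}\subseteq M_k$, directly contradicting the construction.

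The main obstacle is the subtle observation in $(2)\Rightarrow(3)$ that ``not maximal with respect to $s$'' yields \emph{strict} chain extensions together with the non-containment $sM_{n+1}\not\subseteq M_n$; without this one could not ensure the constructed chain actually violates stationarity. Apart from this point, the argument is a uniform refinement of the classical Eakin--Nagata--Formanek proof, with the extra bookkeeping that a single element $s$ must serve simultaneously as witness for all submodules, all chains, and all families.
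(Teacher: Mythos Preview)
Your proof is correct and follows the same cyclic scheme $(1)\Rightarrow(2)\Rightarrow(3)\Rightarrow(1)$ as the paper; the arguments for $(1)\Rightarrow(2)$ and $(2)\Rightarrow(3)$ are essentially identical to the paper's. The one genuine difference is in $(3)\Rightarrow(1)$: you apply the maximality hypothesis directly to the family of \emph{finitely generated} submodules of $N$, obtaining a finitely generated $F_0$ that is maximal with respect to $s$, and then the inclusion $F_0\subseteq F_0+Rn$ immediately forces $sn\in F_0$ for every $n\in N$. The paper instead considers the family $\Gamma=\{A\subseteq N: sA\subseteq F_A\text{ for some finitely generated }F_A\subseteq A\}$, takes a maximal element $A$ with respect to $s$, and argues that $A=N$; but ``maximal with respect to $s$'' only yields $sA_1\subseteq A$ for $A_1=A+Rx\in\Gamma$, not a contradiction to $A\subsetneq A_1$, so the paper's argument as written requires a small repair (one ends up with $s^2N\subseteq F_A$ rather than $sN\subseteq F_A$, which is still enough for $(1)$). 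Your choice of family sidesteps this entirely and gives the conclusion with the same $s$.
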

\begin{proof} $(1)\Rightarrow (2):$ Let $M_1\subseteq M_2\subseteq ... $  be an  ascending chain of submodules of $M$. Set $M_0=\bigcup\limits_{i=1}^{\infty}M_i$. Then there exist an element $s\in S$ and a finitely generated submodule $N_i$ of $M_i$ such that $sM_i\subseteq N_i$ for each $i\geq 0$. 
Since $N_0$ is finitely generated, there exists $k\geq 1$ such that $N_0\subseteq M_k$. Thus $sM_0\subseteq M_k$. So $sM_n\subseteq M_k$ for any $n\geq k$.

$(2)\Rightarrow (3):$ Let $\Gamma$ be a nonempty of submodules of $M$. On the contrary,  we take any $M_1\in \Gamma$. Then $M_1$ is not a maximal element with respective to $s$ for any $s\in S$. Thus there is  $M_2\in \Gamma$ such that $s M_2\not\subseteq M_1$. Since $M_2$ is not a maximal element with respective to $s$, there is $M_3\in \Gamma$, such that $s M_3\not\subseteq M_2$. Similarly, we can get an  ascending chain $M_1\subseteq M_2\subseteq ...\subseteq M_n\subseteq M_{n+1}\subseteq ...$ such that $sM_{n+1}\not\subseteq M_n$ for any $n\geq 1$. Obviously, this  ascending chain is not stationary with respective to  any $s\in S$.

  $(3)\Rightarrow (1):$ Let $N$ be a submodule of $M$ and $s\in S$ the element in $(3)$. Set $\Gamma=\{A\subseteq N| $ there is a finitely generated submodule $F_A$  of $A$ satisfies  $sA\subseteq F_A\}$. Since $0\in \Gamma$, $\Gamma$ is nonempty. Thus $\Gamma$ has a maximal element $A$. If $A\not=N$, then there is an element $x\in N-A$.  Since $F_1=F_A+Rx$ is a finitely generated submodule of $A_1=A+Rx$ such that $sA_1\subseteq F_1$, we have $F_1\in \Gamma$,  which contradicts the choice of maximality of $A$.
\end{proof}

\begin{corollary} \label{u-s-noe-ring-char}
Let $R$ be a ring and $S$ a multiplicative subset of $R$.  Then the following assertions are equivalent:
\begin{enumerate}
\item $R$ is $u$-$S$-Noetherian;
\item there exists an element $s\in S$ such that any  ascending chain of ideals of $R$ is stationary with respective to $s$;

\item  there exists an element $s\in S$ such that any nonempty of ideals of $R$ has a maximal element with respective to $s$.
\end{enumerate}
\end{corollary}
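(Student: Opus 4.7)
The plan is to deduce this corollary directly from Theorem \ref{u-s-noe-char}. The key observation is that the $R$-submodules of $R$ (viewed as a module over itself) are precisely the ideals of $R$, and, as noted in the paragraph preceding Definition \ref{us-no-module}, the ring $R$ is $u$-$S$-Noetherian if and only if it is $u$-$S$-Noetherian as an $R$-module. Thus each of the conditions (1), (2), (3) in the corollary is the specialization of the corresponding condition in Theorem \ref{u-s-noe-char} to the module $M = R$.

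Concretely, I would first restate (1) by applying the module-theoretic definition to $M = R$: the family of all ideals of $R$ is $u$-$S$-finite, which is literally the definition of a $u$-$S$-Noetherian ring. Next, I would invoke Theorem \ref{u-s-noe-char} with $M = R$. The equivalence (1)$\Leftrightarrow$(2) of the theorem yields the existence of an $s \in S$ that witnesses stationarity (with respect to $s$) for every ascending chain of $R$-submodules of $R$, i.e.\ for every ascending chain of ideals, which is condition (2) of the corollary. The equivalence (1)$\Leftrightarrow$(3) of the theorem yields the existence of an $s \in S$ such that every nonempty family of $R$-submodules of $R$, equivalently every nonempty family of ideals, admits a maximal element with respect to $s$, which is condition (3) of the corollary.

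Since nothing in the translation requires any additional argument beyond the identification of submodules of $R$ with ideals, no real obstacle arises. The only mildly delicate point worth flagging explicitly is the direction $(3) \Rightarrow (1)$ of Theorem \ref{u-s-noe-char}: the proof there uses the zero submodule to ensure the auxiliary family $\Gamma$ is nonempty, and this goes through verbatim for ideals since $0$ is an ideal of $R$. Thus the three-paragraph proof consists essentially of writing \textit{Apply Theorem \ref{u-s-noe-char} to $M = R$}, together with the remark identifying $R$-submodules of $R$ with ideals of $R$.
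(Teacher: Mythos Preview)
Your proposal is correct and matches the paper's intended approach: the corollary is stated without proof immediately after Theorem \ref{u-s-noe-char}, the implicit argument being exactly the specialization $M = R$ that you describe. One minor correction: the remark that $R$ is $u$-$S$-Noetherian as a ring if and only if it is $u$-$S$-Noetherian as an $R$-module appears in the paragraph \emph{following} Definition \ref{us-no-module}, not preceding it.
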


We can rediscover the following result by Proposition \ref{s-loc-u-noe-fini}.
\begin{corollary} \cite[Corollary 2.1]{as16}\label{u-s-noe-char-s}
Let $R$ be a ring and $S$ a multiplicative subset of $R$ consisting of finite elements. Then the following assertions are equivalent:
\begin{enumerate}
\item $R$ is an $S$-Noetherian ring;
\item  every increansing sequence of ideals of $R$ is $S$-stationary;
\item  every nonempty set of ideals of $R$ has an $S$-maximal element.
\end{enumerate}
\end{corollary}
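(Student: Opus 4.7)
The plan is to derive Corollary \ref{u-s-noe-char-s} as a direct consequence of the preceding $u$-$S$ machinery, namely Proposition \ref{s-loc-u-noe-fini} and Corollary \ref{u-s-noe-ring-char}, rather than reproving it from scratch. The point is that when $S = \{s_1, \ldots, s_n\}$ is finite, the ``for some $s \in S$'' clauses in the classical definitions of $S$-stationary and $S$-maximal can be bundled into a single uniform witness, namely the product $s = s_1 \cdots s_n$.

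First I would invoke Proposition \ref{s-loc-u-noe-fini} to obtain the equivalence
\[
R \text{ is } S\text{-Noetherian} \iff R \text{ is } u\text{-}S\text{-Noetherian},
\]
which handles (1). Then by Corollary \ref{u-s-noe-ring-char}, $R$ being $u$-$S$-Noetherian is equivalent to the existence of a single $s \in S$ with respect to which every ascending chain of ideals is stationary, and also equivalent to the existence of a single $s \in S$ with respect to which every nonempty family of ideals admits a maximal element. So it remains only to bridge between the ``uniform $s$'' formulation and the Ahmed--Sana formulation of $S$-stationary and $S$-maximal.

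For (2), an ascending chain $I_1 \subseteq I_2 \subseteq \cdots$ is $S$-stationary in the sense of \cite{as16} if there exist $k \geq 1$ and some $t \in S$ (possibly depending on the chain) with $tI_n \subseteq I_k$ for all $n \geq k$. Clearly ``stationary with respect to a fixed $s \in S$'' implies $S$-stationary. Conversely, if every chain is $S$-stationary, then for any chain choose a witness $t \in S$; since $S$ is finite, $s = s_1 \cdots s_n$ lies in $tR$ up to absorption, and $sI_n = (s/t)\,t I_n \subseteq (s/t) I_k \subseteq I_k$ (interpreted as $s = (s_1 \cdots s_n)$ times the element $t$ contributes; more carefully, writing $s$ as a multiple of $t$ in $R$ whenever $t \in S \subseteq \{s_1,\dots,s_n\}$ gives $sI_n \subseteq tI_n \subseteq I_k$). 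Thus the uniform $s$ stationarity holds. An entirely parallel argument treats (3): the $S$-maximality of $M_0$ as in \cite{as16} means that for any $M_i \supseteq M_0$ in the family there is some $t \in S$ with $tM_i \subseteq M_0$, and since $s$ is a multiple of every $t \in S$ we again get $sM_i \subseteq M_0$.

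The main (minor) obstacle is purely bookkeeping: making sure the $s$ chosen via Proposition \ref{s-loc-u-noe-fini} genuinely serves both as the uniform witness for chains and for maximal elements simultaneously, and that the Ahmed--Sana definitions (where the witness may vary) are in fact equivalent to the uniform ones under the finiteness hypothesis on $S$. Both reduce to the observation that the product $s = s_1 \cdots s_n$ is divisible in $R$ by each element of $S$, so any statement of the form ``$t I \subseteq J$ for some $t \in S$'' automatically upgrades to ``$s I \subseteq J$''. With this in hand the three equivalences (1) $\Leftrightarrow$ (2) $\Leftrightarrow$ (3) follow immediately from Corollary \ref{u-s-noe-ring-char}.
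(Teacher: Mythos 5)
Your proposal is correct and follows exactly the route the paper intends: the paper derives this corollary in one line from Proposition \ref{s-loc-u-noe-fini} (finiteness of $S$ lets $S$-Noetherian and $u$-$S$-Noetherian coincide) combined with Corollary \ref{u-s-noe-ring-char}, leaving implicit the bridging you spell out. Your observation that the product $s = s_1\cdots s_n$ is a multiple of every element of $S$, so any witness $t$ with $tI \subseteq J$ upgrades to $sI \subseteq J$, is precisely the (omitted) detail that makes the paper's one-line derivation work, and your version of it is sound.
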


Recall from \cite{z21}, an $R$-sequence  $M\xrightarrow{f} N\xrightarrow{g} L$ is called  \emph{$u$-$S$-exact} provided that there is an element $s\in S$ such that $s\Ker(g)\subseteq \Im(f)$ and $s\Im(f)\subseteq \Ker(g)$. An $R$-homomorphism $f:M\rightarrow N$ is an \emph{$u$-$S$-monomorphism}  $($resp.,   \emph{$u$-$S$-epimorphism}, \emph{$S$-isomorphism}$)$ provided $0\rightarrow M\xrightarrow{f} N$   $($resp., $M\xrightarrow{f} N\rightarrow 0$, $0\rightarrow M\xrightarrow{f} N\rightarrow 0$ $)$ is   $u$-$S$-exact. It is easy to verify that an  $R$-homomorphism $f:M\rightarrow N$ is a $u$-$S$-monomorphism $($resp., $u$-$S$-epimorphism$)$ if and only if  $\Ker(f)$ $($resp., $\Coker(f))$ is a  $u$-$S$-torsion module.

\begin{lemma}\label{s-exct-tor}\cite[Proposition 2.8]{z21}
Let $R$ be a ring and $S$ a multiplicative subset of $R$.  Let $0\rightarrow A\xrightarrow{f} B\xrightarrow{g} C\rightarrow 0$  be a $u$-$S$-exact sequence. Then  $B$ is  $u$-$S$-torsion if and only if $A$ and $C$ are $u$-$S$-torsion.
\end{lemma}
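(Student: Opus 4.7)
The plan is to exploit the multiplicative closure of $S$ throughout: each hypothesis supplies a single element of $S$ that witnesses the relevant inclusion, and products of those elements (still in $S$) will annihilate whatever we need. Concretely, unpacking the definitions, $u$-$S$-exactness of $0\to A\xrightarrow{f}B\xrightarrow{g}C\to 0$ yields a single $s\in S$ such that $s\Ker(f)=0$, $s\Ker(g)\subseteq\Im(f)$, $s\Im(f)\subseteq\Ker(g)$, and $sC\subseteq\Im(g)$. I will use this $s$ in tandem with the torsion witnesses on each side.

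For the backward direction, assume $A$ and $C$ are $u$-$S$-torsion with $s_1A=0$ and $s_2C=0$. Given $b\in B$, apply $g$: since $s_2g(b)=g(s_2b)=0$, we have $s_2b\in\Ker(g)$. By $u$-$S$-exactness at $B$, $s s_2 b\in\Im(f)$, so $ss_2b=f(a)$ for some $a\in A$. Then $s_1s s_2 b=s_1 f(a)=f(s_1a)=0$. Therefore the element $ss_1s_2\in S$ annihilates $B$, so $B$ is $u$-$S$-torsion.

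For the forward direction, assume $s'B=0$ for some $s'\in S$. For $C$: given $c\in C$, use $sC\subseteq\Im(g)$ to write $sc=g(b)$ for some $b\in B$; then $s' sc=s'g(b)=g(s'b)=0$, so $ss'$ annihilates $C$. For $A$: given $a\in A$, we have $s'f(a)=f(s'a)=0$, which says $s'a\in\Ker(f)$; then $ss'a=s\cdot(s'a)=0$ since $s\Ker(f)=0$. Hence $ss'\in S$ annihilates $A$. In both cases the annihilator lies in $S$ because $S$ is multiplicatively closed.

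There is no real obstacle: the only thing to be careful about is that the \emph{same} element $s$ controls all four inclusions in the definition of $u$-$S$-exactness (this is built into the ``uniform'' formulation, and is precisely why products like $ss_1s_2$ and $ss'$ succeed), whereas in the classical $S$-Noetherian setting one would have to juggle distinct witnesses and appeal to finiteness of $S$. I would end by remarking that the conclusion is in fact symmetric: the argument shows $B$ is $u$-$S$-torsion via a witness depending only on the torsion witnesses of $A,C$ and the exactness witness $s$, and conversely $A,C$ are $u$-$S$-torsion via witnesses depending only on $s'$ and $s$.
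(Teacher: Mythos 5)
Your proof is correct. Note that this paper does not actually prove the lemma --- it is quoted from \cite[Proposition 2.8]{z21} without proof --- so there is no internal argument to compare against; your elementary element-chase (extract a single exactness witness $s$ with $s\Ker(f)=0$, $s\Ker(g)\subseteq\Im(f)$, $s\Im(f)\subseteq\Ker(g)$, $sC\subseteq\Im(g)$, then combine it multiplicatively with the torsion witnesses to get $ss_1s_2B=0$, respectively $ss'A=0$ and $ss'C=0$) is exactly the standard argument one would expect, and your remark that a priori separate witnesses at the three positions can be merged into one by taking their product in $S$ correctly handles the only delicate point in the definition.
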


\begin{lemma}\label{s-exct-diag}
Let $R$ be a ring and $S$ a multiplicative subset of $R$.  Let
$$\xymatrix@R=20pt@C=25pt{
  0 \ar[r]^{}&A_1\ar@{^{(}->}[d]^{i_A}\ar[r]& B_1 \ar[r]^{\pi_1}\ar@{^{(}->}[d]^{i_B}&C_1\ar[r] \ar@{^{(}->}[d]^{i_C} &0\\
0 \ar[r]^{}&A_2\ar[r]&B_2 \ar[r]^{\pi_2}&C_2\ar[r] &0\\}$$
be a commutative diagram with exact rows, where $i_A, i_B$ and $i_C$ are embedding maps. Suppose $s_A A_2\subseteq A_1$ and $s_C C_2\subseteq C_1$ for some $s_A\in S, s_C\in S$. Then $s_As_CB_2\subseteq B_1$.
\end{lemma}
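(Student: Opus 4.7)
The plan is a straightforward diagram chase, exploiting the fact that the vertical maps are inclusions so we can identify $A_i$ with its image in $B_i$ and $B_1$ with its image in $B_2$.

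First, I would fix an arbitrary element $b \in B_2$ and push it down to $C_2$ via $\pi_2$. By hypothesis $s_C \pi_2(b) \in C_1$, and since $\pi_1\colon B_1 \to C_1$ is surjective I can lift this to some $b_1 \in B_1$ with $\pi_1(b_1) = s_C \pi_2(b)$. Viewing $b_1$ as an element of $B_2$ through $i_B$ and using the commutativity of the right-hand square (that is, $\pi_2 \circ i_B = i_C \circ \pi_1$), I obtain $\pi_2(b_1) = s_C \pi_2(b) = \pi_2(s_C b)$, so $s_C b - b_1 \in \ker \pi_2 = A_2$ (identified as a submodule of $B_2$).

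Next, I apply the hypothesis $s_A A_2 \subseteq A_1$ to conclude $s_A(s_C b - b_1) \in A_1 \subseteq B_1$. Rewriting,
$$s_A s_C b \;=\; s_A b_1 \;+\; s_A(s_C b - b_1),$$
and both summands lie in $B_1$: the first because $b_1 \in B_1$ and the second by the previous sentence. Hence $s_A s_C b \in B_1$, which gives $s_A s_C B_2 \subseteq B_1$ as claimed.

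There is no real obstacle here; the only thing to be careful about is to not confuse the two senses in which ``containment'' is being used, namely the literal inclusions $A_i \subseteq B_i$ and $B_1 \subseteq B_2$ coming from the embedding maps $i_A, i_B, i_C$, versus the abstract exact-sequence role of $A_i$ as $\ker \pi_i$. Writing out the commutativity $\pi_2 \circ i_B = i_C \circ \pi_1$ explicitly at the key step removes any ambiguity.
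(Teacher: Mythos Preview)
Your proof is correct and follows essentially the same diagram chase as the paper: push an element of $B_2$ to $C_2$, multiply by $s_C$ to land in $C_1$, lift via the surjectivity of $\pi_1$, observe the difference lies in $\ker\pi_2 = A_2$, then multiply by $s_A$ to land in $A_1\subseteq B_1$. The paper's argument is identical up to notation (it uses $x,y,a_2$ where you use $b,b_1,s_Cb-b_1$).
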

\begin{proof} Let $x\in B_2$. Then $\pi_2(x)\in C_2$. Thus $s_C\pi_2(x)=\pi_2(s_Cx)\in C_1$. So we have $\pi_1(y)=\pi_2(y)=\pi_2(s_Cx)$ for some $y\in B_1$. Thus $s_Cx-y=a_2$ for some $a_2\in A_2$. It follows that $s_As_Cx=s_Ay+s_Aa_2\in B_1$. Consequently,  $s_As_CB_2\subseteq B_1$.
\end{proof}

\begin{lemma} \label{s-u-noe-exact}
Let $R$ be a ring and $S$ a multiplicative subset of $R$. Let $0\rightarrow A\rightarrow B\rightarrow C\rightarrow 0$ be an exact sequence. Then $B$ is $u$-$S$-Noetherian if and only if $A$ and $C$ are $u$-$S$-Noetherian.
\end{lemma}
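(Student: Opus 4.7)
The plan is to prove each direction separately, with Lemma \ref{s-exct-diag} doing most of the work in the harder direction.

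For the forward implication, I would first identify $A$ with its image in $B$ and let $\pi:B\to C$ be the projection. Suppose $B$ is $u$-$S$-Noetherian with witness $s\in S$. Any submodule $N\subseteq A$ is also a submodule of $B$, so there is a finitely generated $F\subseteq N$ with $sN\subseteq F$; this shows $A$ is $u$-$S$-Noetherian with the same $s$. For $C$, given a submodule $L\subseteq C$, lift it to $N=\pi^{-1}(L)\subseteq B$, obtain a finitely generated $F\subseteq N$ with $sN\subseteq F$, and observe that $\pi(F)$ is a finitely generated submodule of $L$ with $sL=s\pi(N)=\pi(sN)\subseteq\pi(F)$. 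Thus $C$ is $u$-$S$-Noetherian (again with the same $s$).

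For the backward implication, suppose $A$ is $u$-$S$-Noetherian with witness $s_A\in S$ and $C$ is $u$-$S$-Noetherian with witness $s_C\in S$. Let $N\subseteq B$ be an arbitrary submodule. Form the short exact sequence
\begin{equation*}
0\to N\cap A\to N\xrightarrow{\pi|_N}\pi(N)\to 0.
\end{equation*}
Since $N\cap A\subseteq A$ and $\pi(N)\subseteq C$, we can choose finitely generated $F_A\subseteq N\cap A$ with $s_A(N\cap A)\subseteq F_A$, and finitely generated $F_C\subseteq\pi(N)$ with $s_C\pi(N)\subseteq F_C$. Lift each of the finitely many generators of $F_C$ to elements of $N$ to obtain a finitely generated submodule $F_C'\subseteq N$ with $\pi(F_C')=F_C$. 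Set $F=F_A+F_C'$, which is finitely generated and contained in $N$. Then the diagram
\begin{equation*}
\xymatrix@R=16pt@C=22pt{
0\ar[r]&F_A\ar@{^{(}->}[d]\ar[r]&F\ar[r]\ar@{^{(}->}[d]&F_C\ar[r]\ar@{^{(}->}[d]&0\\
0\ar[r]&N\cap A\ar[r]&N\ar[r]^{\pi|_N}&\pi(N)\ar[r]&0}
\end{equation*}
has exact rows, so Lemma \ref{s-exct-diag} yields $s_As_C N\subseteq F$. Since $s_As_C\in S$ is a single element independent of $N$, this shows $B$ is $u$-$S$-Noetherian.

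I do not anticipate any serious obstacle: the only subtle point is ensuring that the witness $s$ obtained in the backward direction is independent of $N$, which is automatic because $s_A$ and $s_C$ come from the $u$-$S$-Noetherian hypotheses on $A$ and $C$ and are fixed once and for all. Lemma \ref{s-exct-diag} is tailor-made for precisely this uniform control across all submodules, so the argument is conceptually clean.
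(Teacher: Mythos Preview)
Your proposal is correct and follows essentially the same route as the paper: the forward direction is the routine observation the paper dismisses as ``easy to verify,'' and for the converse you build the same commutative diagram (with $N\cap A$ and $\pi(N)\cong (N+A)/A$ on the bottom and their finitely generated approximations on top) and invoke Lemma~\ref{s-exct-diag} with the uniform witness $s_As_C$. One small caveat worth noting---and the paper is equally casual here---is that the top row $0\to F_A\to F\to F_C\to 0$ need not be exact at $F$, since $F\cap A$ may properly contain $F_A$; however the proof of Lemma~\ref{s-exct-diag} only uses surjectivity of the top map onto $F_C$ and exactness of the bottom row, so the conclusion still holds.
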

\begin{proof} It is easy to verify that if $B$ is $u$-$S$-Noetherian, so are $A$ and $C$. Suppose $A$ and $C$ are $u$-$S$-Noetherian. Let $\{B_i\}_{i\in \Lambda}$ be the set of all submodules of $B$. Then there exists an element $s_1\in S$ such that  $s_1(A\cap B_i)\subseteq K_i\subseteq A\cap B_i$ for some finitely generated $R$-module $K_i$ and any $i\in \Lambda$, since $A$ is $u$-$S$-Noetherian. There also exists an element $s_2\in S$ such that $s_2(B_i+A)/A\subseteq L_i\subseteq (B_i+A)/A$ for some finitely generated $R$-module $L_i$ and any $i\in \Lambda$, since $C$ is $u$-$S$-Noetherian. Let $N_i$ be the finitely generated submodule of $B_i$ generated by the finite generators of $K_i$ and finite pre-images of generators of  $L_i$.  Consider the following natural commutative diagram with exact rows: $$\xymatrix@R=20pt@C=25pt{
  0 \ar[r]^{}&K_i\ar@{^{(}->}[d]\ar[r]&N_i \ar[r]\ar@{^{(}->}[d]&L_i\ar[r] \ar@{^{(}->}[d] &0\\
0 \ar[r]^{}&A\cap B_i\ar[r]&B_i \ar[r]&(B_i+A)/A \ar[r] &0.\\}$$
Set $s=s_1s_2\in S$. We have  $sB_i\subseteq N_i\subseteq B_i$ by Lemma \ref{s-exct-diag}. So $B$ is $u$-$S$-Noetherian.
\end{proof}

\begin{proposition} \label{s-u-noe-s-exact}
Let $R$ be a ring and $S$ a multiplicative subset of $R$. Let $0\rightarrow A\rightarrow B\rightarrow C\rightarrow 0$ be a $u$-$S$-exact sequence. Then $B$ is $u$-$S$-Noetherian if and only if $A$ and $C$ are $u$-$S$-Noetherian
\end{proposition}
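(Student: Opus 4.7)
The plan is to reduce the $u$-$S$-exact hypothesis to the genuinely exact setting already handled by Lemma \ref{s-u-noe-exact}. Write the sequence as $0\to A\xrightarrow{f} B\xrightarrow{g} C\to 0$ and unfold the definition of $u$-$S$-exactness: there is a single $s\in S$ with $s\Ker(f)=0$, $s\Ker(g)\subseteq\Im(f)$, $s\Im(f)\subseteq\Ker(g)$, and $sC\subseteq\Im(g)$.

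For the implication $B$ $u$-$S$-Noetherian $\Rightarrow$ $A$, $C$ $u$-$S$-Noetherian, fix a uniform witness $t\in S$ for $B$. Given a submodule $A'\leq A$, the hypothesis applied to $f(A')\leq B$ produces $a_1,\dots,a_n\in A'$ with $tf(A')\subseteq\langle f(a_1),\dots,f(a_n)\rangle$. For each $a\in A'$, writing $tf(a)=\sum r_if(a_i)$ puts $ta-\sum r_ia_i$ into $\Ker(f)$, so multiplying by $s$ kills this element and gives $sta\in\langle a_1,\dots,a_n\rangle$; hence $A$ is $u$-$S$-Noetherian with uniform witness $st$. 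For a submodule $C'\leq C$, I would instead apply the hypothesis to $B'=g^{-1}(C')\leq B$ to extract $b_1,\dots,b_n\in B'$ with $tB'\subseteq\langle b_1,\dots,b_n\rangle$. Since $sC\subseteq\Im(g)$, every $c\in C'$ satisfies $sc=g(b)$ for some $b\in g^{-1}(C')=B'$, whence $stc=g(tb)\in\langle g(b_1),\dots,g(b_n)\rangle\subseteq C'$, so $C$ is $u$-$S$-Noetherian.

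For the converse, assume $A$ and $C$ are $u$-$S$-Noetherian. Submodules and quotients of $u$-$S$-Noetherian modules are $u$-$S$-Noetherian, as is immediate from Lemma \ref{s-u-noe-exact} applied to obvious short exact sequences. Hence $\Im(f)\cong A/\Ker(f)$ is $u$-$S$-Noetherian and $\Im(g)\leq C$ is $u$-$S$-Noetherian. The bridge between these is the submodule $N:=\Im(f)\cap\Ker(g)$: since $N\leq\Im(f)$, it is $u$-$S$-Noetherian, and since $s\Ker(g)\subseteq\Im(f)$ forces $s\Ker(g)\subseteq N$, the quotient $\Ker(g)/N$ is $u$-$S$-torsion and thus $u$-$S$-Noetherian. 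One application of Lemma \ref{s-u-noe-exact} to $0\to N\to\Ker(g)\to\Ker(g)/N\to 0$ yields that $\Ker(g)$ is $u$-$S$-Noetherian, and a second application to the genuinely exact sequence $0\to\Ker(g)\to B\to\Im(g)\to 0$ delivers the conclusion.

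The main subtlety is that $u$-$S$-exactness permits $\Im(f)$ and $\Ker(g)$ to be incomparable in both directions, so one cannot treat the given $u$-$S$-exact sequence as if it were genuinely exact. Introducing the bridging submodule $N=\Im(f)\cap\Ker(g)$ and using both $s$-containments simultaneously is what resolves this; a little bookkeeping is then needed to absorb all the extra factors of $s$ into a single uniform element of $S$, as required by the $u$-$S$-Noetherian definition.
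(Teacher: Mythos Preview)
Your proof is correct. Both directions go through as written, and the uniform witnesses you produce are valid.

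The approaches differ somewhat from the paper. The paper handles both implications simultaneously by a single chain of equivalences built from genuinely exact sequences: it applies Lemma~\ref{s-u-noe-exact} to $0\to\Ker(f)\to A\to\Im(f)\to 0$, to $0\to\Ker(g)\to B\to\Im(g)\to 0$, and to $0\to\Im(g)\to C\to\Coker(g)\to 0$, noting that $\Ker(f)$ and $\Coker(g)$ are $u$-$S$-torsion; it then bridges $\Im(f)$ and $\Ker(g)$ by observing that $\Im(f)/s\Ker(g)$ and $\Ker(g)/s\Im(f)$ are both $u$-$S$-torsion, so one is $u$-$S$-Noetherian iff the other is. Your converse direction is essentially the same structural argument, with a slightly cleaner bridge: your choice $N=\Im(f)\cap\Ker(g)$ is a genuine submodule of both $\Im(f)$ and $\Ker(g)$, which avoids the slightly awkward quotients by $s\Ker(g)$ and $s\Im(f)$. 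Your forward direction, by contrast, is a direct element-chasing argument that bypasses Lemma~\ref{s-u-noe-exact} entirely; this is more hands-on and makes the uniform witness $st$ completely explicit, at the cost of being less symmetric with the converse.
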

\begin{proof}  Let $0\rightarrow A\xrightarrow{f} B\xrightarrow{g} C\rightarrow 0$ be a $u$-$S$-exact sequence. Then there exists an element $s\in S$ such that  $ s\Ker(g)\subseteq  \Im(f)$ and $ s\Im(f)\subseteq  \Ker(g)$. Note that $\Im(f)/s\Ker(g)$ and $\Ker(g)/s\Im(f)$ are $u$-$S$-torsion. If $\Im(f)$ is $u$-$S$-Noetherian, then the submodule $s\Im(f)$ of $\Im(f)$ is $u$-$S$-Noetherian. Thus $\Ker(g)$ is $u$-$S$-Noetherian by Lemma \ref{s-u-noe-exact}. Similarly, if  $\Ker(g)$ is $u$-$S$-Noetherian, then $\Im(f)$ is $u$-$S$-Noetherian. Consider the following three exact sequences:
$0\rightarrow\Ker(g) \rightarrow  B\rightarrow \Im(g)\rightarrow 0,\quad 0\rightarrow\Im(g) \rightarrow  C\rightarrow \Coker(g)\rightarrow 0,$ and $0\rightarrow\Ker(f) \rightarrow  A\rightarrow \Im(f)\rightarrow 0$
with $\Ker(f)$ and $\Coker(g)$ $u$-$S$-torsion.   It is easy to verify that $B$ is $u$-$S$-Noetherian if and only if $A$ and $C$ are $u$-$S$-Noetherian by Lemma \ref{s-u-noe-exact}.
\end{proof}

\begin{corollary} \label{s-u-noe-u-iso}
Let $R$ be a ring, $S$ a multiplicative subset of $R$ and $ M\xrightarrow{f} N$  a $u$-$S$-isomorphism. If one of $M$ and $N$ is $u$-$S$-Noetherian, so is the other.
\end{corollary}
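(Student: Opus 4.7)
The plan is to derive this corollary as an immediate application of Proposition \ref{s-u-noe-s-exact}. Specifically, I would package the given $u$-$S$-isomorphism into a $u$-$S$-exact sequence with a zero term, so that the two nontrivial modules appearing in the hypothesis of that proposition become exactly $M$ and $N$.

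First, I would recall that by definition $f : M \to N$ is a $u$-$S$-isomorphism precisely when the sequence $0 \to M \xrightarrow{f} N \to 0$ is $u$-$S$-exact, equivalently when both $\Ker(f)$ and $\Coker(f)$ are $u$-$S$-torsion. I would then consider the extended sequence
\[
0 \longrightarrow M \xrightarrow{\ f\ } N \longrightarrow 0 \longrightarrow 0,
\]
and check that it is $u$-$S$-exact in the sense of the paper: $u$-$S$-exactness at $M$ is the $u$-$S$-injectivity of $f$, $u$-$S$-exactness at $N$ amounts to $\Coker(f)$ being $u$-$S$-torsion (so that some $s\in S$ satisfies $sN\subseteq \Im(f)$), and $u$-$S$-exactness at the trailing $0$ is vacuous.

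With this setup in hand, I would apply Proposition \ref{s-u-noe-s-exact} with $A=M$, $B=N$, $C=0$. Since the zero module is trivially $u$-$S$-Noetherian, the proposition reduces to the statement that $M$ is $u$-$S$-Noetherian if and only if $N$ is, which is precisely the claim. The entire content of the corollary is the translation from the isomorphism language to the three-term sequence language, after which Proposition \ref{s-u-noe-s-exact} does all the work; consequently there is no genuine obstacle, and the only point requiring care is the routine verification that the constructed four-term sequence meets the paper's definition of $u$-$S$-exactness.
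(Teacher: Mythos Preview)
Your proposal is correct and matches the paper's own proof essentially verbatim: the paper simply notes that $0\to M\xrightarrow{f} N\to 0\to 0$ is $u$-$S$-exact and invokes Proposition~\ref{s-u-noe-s-exact}. Your added verification of $u$-$S$-exactness at each position is a harmless elaboration of what the paper leaves implicit.
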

\begin{proof} It follows from Proposition \ref{s-u-noe-exact} since $0\rightarrow M\xrightarrow{f} N\rightarrow 0\rightarrow 0$ is a $u$-$S$-exact sequence.
\end{proof}





Let $\p$ be a prime ideal of $R$. We say an $R$-module $M$ is \emph{$u$-$\p$-Noetherian} provided that  $M$ is  $u$-$(R\setminus\p)$-Noetherian. The next result gives a local characterization of Noetherian modules.
\begin{proposition}\label{s-noe-m-loc-char}
Let $R$ be a ring and $M$ an $R$-module. Then the following statements are equivalent:
 \begin{enumerate}
\item  $M$ is Noetherian;
\item   $M$ is $u$-$\p$-Noetherian for any $\p\in \Spec(R)$;
\item   $M$ is  $u$-$\m$-Noetherian for any $\m\in \Max(R)$.
 \end{enumerate}
\end{proposition}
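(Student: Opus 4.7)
The implications $(1)\Rightarrow(2)\Rightarrow(3)$ are essentially immediate. For $(1)\Rightarrow(2)$, if $M$ is Noetherian, then every submodule of $M$ is already finitely generated, so we may take $s=1$ in the definition of $u$-$\p$-Noetherian for every prime $\p$. For $(2)\Rightarrow(3)$, one just notes that $\Max(R)\subseteq \Spec(R)$.

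The substantive direction is $(3)\Rightarrow(1)$. The plan is to fix an arbitrary submodule $N$ of $M$ and build a single finitely generated submodule of $N$ that equals $N$, using a partition-of-unity argument over $\Max(R)$. For each $\m\in\Max(R)$, the hypothesis provides some $s_\m\in R\setminus \m$ (depending on $\m$ but not on $N$) together with a finitely generated submodule $K_{N,\m}\subseteq N$ such that $s_\m N\subseteq K_{N,\m}$. The ideal $J$ generated by $\{s_\m : \m\in\Max(R)\}$ meets every maximal ideal in a unit, i.e.\ $J$ is contained in no maximal ideal, so $J=R$. Thus we may write $1=\sum_{i=1}^{n} r_i s_{\m_i}$ for finitely many maximal ideals $\m_1,\dots,\m_n$ and $r_i\in R$.

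Given any $x\in N$, this yields
\[
x = \sum_{i=1}^{n} r_i (s_{\m_i} x) \in \sum_{i=1}^{n} r_i K_{N,\m_i} \subseteq K_{N,\m_1}+\cdots+K_{N,\m_n},
\]
so $N=K_{N,\m_1}+\cdots+K_{N,\m_n}$ is finitely generated as a sum of finitely many finitely generated submodules of $N$. Since $N$ was arbitrary, every submodule of $M$ is finitely generated, i.e.\ $M$ is Noetherian.

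The only potential pitfall is the order of quantifiers in the definition of $u$-$\m$-Noetherian: the element $s_\m\notin \m$ must not depend on the submodule $N$, which is exactly what lets us apply the argument above, since the collection $\{s_\m\}_{\m\in\Max(R)}$ is produced before any $N$ is chosen. Once this point is handled, the rest is routine commutative algebra (primes not contained in the union of the maximals, or equivalently the unit-ideal argument) and no delicate computation is needed.
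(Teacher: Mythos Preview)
Your proof is correct and follows the same partition-of-unity argument as the paper: for each $\m\in\Max(R)$ pick $s_\m\notin\m$ with $s_\m N\subseteq K_{N,\m}$ finitely generated, use that the $s_\m$ generate the unit ideal to write $1=\sum_i r_i s_{\m_i}$, and conclude $N=\sum_i K_{N,\m_i}$ is finitely generated. Your closing remark about the order of quantifiers is unnecessary, however: the argument goes through even if $s_\m$ were allowed to depend on $N$, since you only need each fixed $N$ to be finitely generated, not that a single finite list $\m_1,\dots,\m_n$ works uniformly for all submodules.
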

\begin{proof} $(1)\Rightarrow (2)\Rightarrow (3):$  Trivial.

$(3)\Rightarrow (1):$ Let $N$ be an submodule of $M$. Then for each  $\m\in \Max(R)$, there exists an element $s^{\m}\in R\setminus\m$ and a finitely generated submodule $F^{\m}$ of $N$ such that $s^{\m}N\subseteq F^{\m}$. Since $\{s^{\m} \mid \m \in \Max(R)\}$ generated $R$, there exist finite elements $\{s^{\m_1},...,s^{\m_n}\}$ such that $N=\langle s^{\m_1},...,s^{\m_n}\rangle N\subseteq F^{\m_1}+...+F^{\m_n}\subseteq N$. So $N=F^{\m_1}+...+F^{\m_n}$. It follows that $N$ is finitely generated, and thus $M$ is Noetherian.
\end{proof}

\begin{corollary}\label{s-noe-m-loc-char}
Let $R$ be a ring. Then the following statements are equivalent:
 \begin{enumerate}
\item  $R$ is a Noetherian ring;
\item   $R$ is a $u$-$\p$-Noetherian ring for any $\p\in \Spec(R)$;
\item   $R$ is  a $u$-$\m$-Noetherian ring  for any $\m\in \Max(R)$.
 \end{enumerate}
\end{corollary}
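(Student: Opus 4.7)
The plan is to deduce this ring-level statement directly from the module-level Proposition \ref{s-noe-m-loc-char} (the one proved immediately above) by specializing to the regular module $M = R$. The bridge is the observation, already recorded after Definition \ref{us-no-module}, that a ring $R$ is $u$-$S$-Noetherian if and only if $R$ is $u$-$S$-Noetherian when viewed as an $R$-module, because the submodules of $R$ are precisely the ideals of $R$. Thus all three conditions of the corollary are the conditions of the preceding proposition for $M=R$, translated back into ring language.

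Concretely, first I would note that the implications $(1)\Rightarrow(2)\Rightarrow(3)$ are automatic: since $R$ is Noetherian means every ideal is finitely generated, we may take $s=1$ for every prime (resp.\ maximal) ideal, so $R$ is $u$-$\p$-Noetherian (resp.\ $u$-$\m$-Noetherian) for all such $\p$ (resp.\ $\m$). For the essential implication $(3)\Rightarrow(1)$, I would simply invoke Proposition \ref{s-noe-m-loc-char} applied to the $R$-module $R$: being $u$-$\m$-Noetherian as a ring is the same as being $u$-$\m$-Noetherian as a module for every $\m\in\Max(R)$, so the module-level proposition yields that $R$ is Noetherian as an $R$-module, hence a Noetherian ring.

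There is essentially no obstacle here; the only thing one has to be careful about is the dictionary between ``submodule of $R$'' and ``ideal of $R$'', which is exactly what makes the specialization legitimate. The substantive content — the gluing argument using that $\{s^{\m}\mid \m\in\Max(R)\}$ generates the unit ideal, hence admits a finite subfamily generating $R$ — has already been carried out in the proof of Proposition \ref{s-noe-m-loc-char}, and need not be repeated.
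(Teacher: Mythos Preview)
Your proposal is correct and matches the paper's approach exactly: the corollary is stated immediately after the module-level Proposition \ref{s-noe-m-loc-char} with no separate proof, precisely because it is the specialization $M=R$ together with the remark (recorded after Definition \ref{us-no-module}) that $R$ is $u$-$S$-Noetherian as a ring if and only if it is $u$-$S$-Noetherian as an $R$-module.
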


\section{$u$-$S$-Noetherian properties on some  ring constructions}

In this section, we mainly consider the $u$-$S$-Noetherian properties on trivial extensions, pullbacks and amalgamated algebras along an ideal. For more on these ring constructions, one can refer to \cite{DW09,lO14}.

Let $R$ be a commutative ring and $M$ be an $R$-module. Then the \emph{trivial extension} of $R$ by $M$ denoted by $R(+)M$ is equal to $R\bigoplus M$ as $R$-modules with coordinate-wise addition and multiplication $(r_1,m_1)(r_2,m_2)=(r_1r_2,r_1m_2+r_2m_1)$. It is easy to verify that $R(+)M$ is a commutative ring with identity $(1,0)$. Let $S$ be a multiplicative subset of $R$. Then it is easy to verify that $S(+)M=\{(s,m)|s\in S, m\in M\}$ is a multiplicative subset of $R(+)M$. Now, we give a $u$-$S$-Noetherian property on the trivial extension.

\begin{proposition}\label{trivial extension-usn} Let $R$ be a commutative ring, $S$ a multiplicative subset of $R$ and $M$  an $R$-module. Then $R(+)M$ is a $u$-$S(+)M$-Noetherian ring if and only if $R$ is a $u$- $S$-Noetherian ring and $M$ is a $u$-$S$-Noetherian $R$-module.
\end{proposition}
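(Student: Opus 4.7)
The plan is to exploit two natural operations linking the trivial extension to $R$ and $M$: the ring projection $\pi_R\colon R(+)M\to R$, $(r,m)\mapsto r$, and the $R$-module embedding $\iota_M\colon M\hookrightarrow R(+)M$, $m\mapsto(0,m)$. The pivotal observation is that for any ideal $I\trianglelefteq R$ the set $I(+)M$ is an ideal of $R(+)M$, and for any $R$-submodule $N\subseteq M$ the set $0(+)N$ is an ideal of $R(+)M$; moreover, every ideal $J$ of $R(+)M$ fits into the canonical short exact sequence
\[
0\longrightarrow N_J\longrightarrow J\stackrel{\pi_R}{\longrightarrow} I_J\longrightarrow 0,
\]
where $I_J=\pi_R(J)$ is an ideal of $R$ and $N_J=\{m\in M:(0,m)\in J\}$ is an $R$-submodule of $M$.

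For the \emph{only if} direction, assume $R(+)M$ is $u$-$S(+)M$-Noetherian with witness $(s,n)\in S(+)M$. Given an ideal $I$ of $R$, apply the hypothesis to $J=I(+)M$: pick a finitely generated sub-ideal $K'\subseteq J$ with $(s,n)J\subseteq K'$, and apply $\pi_R$. Since $\pi_R$ is a surjective ring homomorphism, $\pi_R(K')$ is a finitely generated sub-ideal of $I$, and the first-coordinate computation $(s,n)(i,m)=(si,\,sm+in)$ yields $sI\subseteq\pi_R(K')$, so $s$ is a witness for $R$. For a submodule $N\subseteq M$, apply the hypothesis instead to $J=0(+)N$: a finitely generated sub-ideal of $0(+)N$ has the form $0(+)L$ where $L$ is a finitely generated $R$-submodule of $N$ (because $(0,b)(r,m')=(0,rb)$, so the ideal generated by $(0,b_1),\dots,(0,b_q)$ in $R(+)M$ is $0(+)\langle b_1,\dots,b_q\rangle_R$), and $(s,n)\cdot 0(+)N=0(+)sN$, giving $sN\subseteq L$; hence $s$ is a witness for $M$.

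For the \emph{if} direction, let $s_1\in S$ witness the $u$-$S$-Noetherian property of $R$ and $s_2\in S$ witness that of $M$. Given $J\trianglelefteq R(+)M$, choose $a_1,\dots,a_p\in I_J$ with $s_1 I_J\subseteq\langle a_1,\dots,a_p\rangle_R$ and lift each to some $(a_j,m_j)\in J$; choose $b_1,\dots,b_q\in N_J$ with $s_2 N_J\subseteq\langle b_1,\dots,b_q\rangle_R$. Let $K'$ be the ideal of $R(+)M$ generated by $\{(a_j,m_j)\}_{j=1}^{p}\cup\{(0,b_k)\}_{k=1}^{q}$, a finitely generated sub-ideal of $J$. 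I claim $(s_1s_2,0)J\subseteq K'$, which shows $R(+)M$ is $u$-$S(+)M$-Noetherian with witness $(s_1s_2,0)\in S(+)M$.

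The verification of the claim is the main technical step and the one I expect to require the most care. Given $(r,m)\in J$, write $s_1 r=\sum\alpha_j a_j$; then $(s_1,0)(r,m)-\sum(\alpha_j,0)(a_j,m_j)=(0,\,s_1m-\sum\alpha_j m_j)$, which lies in $J$ because each summand does, and therefore lies in $0(+)N_J$. Multiplying this element by $(s_2,0)$ replaces its second coordinate by an element of $s_2 N_J\subseteq\langle b_1,\dots,b_q\rangle_R$, so it becomes a finite $R(+)M$-combination of the $(0,b_k)$ and hence an element of $K'$; meanwhile $(s_2,0)\sum(\alpha_j,0)(a_j,m_j)\in K'$ trivially. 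Adding gives $(s_1s_2,0)(r,m)\in K'$, completing the proof. The only subtlety is keeping careful track of the second coordinate after multiplication by $(s_1,0)$, since it mixes $s_1m$ with the correction terms $\alpha_j m_j$; once this is organized, the argument is linear.
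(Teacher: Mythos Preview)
Your proof is correct. The \emph{only if} direction mirrors the paper's argument almost exactly: both apply the $u$-$S(+)M$-Noetherian hypothesis to the ideals $I(+)M$ and $0(+)N$ and then project or restrict.

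The \emph{if} direction, however, takes a genuinely different route. The paper invokes its Eakin--Nagata--Formanek characterization (the equivalence with uniform stationarity of ascending chains) together with a diagram lemma: given a chain $O_1\subseteq O_2\subseteq\cdots$ in $R(+)M$, it passes to the projected chain $\pi(O_\bullet)$ in $R$ and the intersected chain $O_\bullet\cap(0(+)M)$ in $M$, obtains uniform stationarity indices for each, and then combines them via the short exact sequence $0\to O_n\cap(0(+)M)\to O_n\to\pi(O_n)\to 0$. Your argument instead works directly with the definition: for an arbitrary ideal $J$ you build the finitely generated sub-ideal $K'$ by lifting generators of $s_1I_J$ and adjoining generators of $s_2N_J$, and then verify $(s_1s_2,0)J\subseteq K'$ by the explicit second-coordinate computation. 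Your approach is more self-contained (it does not depend on the chain characterization or the diagram lemma), while the paper's approach illustrates how those earlier results can be used as black boxes. In essence your computation is an unwound, specialized instance of the paper's Lemma~\ref{s-exct-diag}.
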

\begin{proof} Note that we have an exact sequence of  $R(+)M$-modules: $$0\rightarrow 0(+)M\xrightarrow{i} R(+)M\xrightarrow{\pi} R\rightarrow 0.$$ Suppose $R(+)M$ is a $u$-$S(+)M$-Noetherian ring. Let $\{I_i\}_{i\in \Lambda}$ be the set of all ideals of $R$. Then $\{I_i(+)M\}_{i\in \Lambda}$ is a set of ideals of $R(+)M$. So there is an element $(s,m)\in S(+)M$ and  finitely generated sub-ideals $O_i$  of $I_i(+)M$ such that $(s,m)I_i(+)M\subseteq O_i$. Thus  $sI_i\subseteq \pi(O_i)\subseteq I_i$. Suppose $O_i$ is  generated by $\{(r_{1,i},m_{1,i}),...,(r_{n,i},m_{n,i})\}$. Then it is easy to verify that $\pi(O_i)$  is generated by $\{r_{1,i},...,r_{n,i}\}$. So $R$ is a $u$-$S$-Noetherian ring. Let $\{M_i\}_{i\in \Gamma}$ be the set of all submodules of $M$. Then $\{0(+)M_i\}_{i\in \Gamma}$ is a set of  ideals of $R(+)M$. Thus there is an element $(s',m')\in S(+)M$ and  finitely generated sub-ideals $O'_i$  of $0(+)M_i$ such that $(s',m')0(+)M_i\subseteq O'_i$. So $s'M_i\subseteq N_i\subseteq M_i$ where  $0(+)N_i=O'_i$. Suppose that $O'_i$ is generated by $\{(r'_{1,i},m'_{1,i}),...,(r'_{n,i},m'_{n,i}\}$. Then it is easy to verify that $N_i$ is generated by $\{m'_{1,i},...,m'_{n,i}\}$. Thus $M$ is a $u$-$S$-Noetherian $R$-module.

Suppose $R$ is a $u$-$S$-Noetherian ring and $M$ is a $u$-$S$-Noetherian $R$-module. Let $O^{\bullet}: O_1\subseteq O_2\subseteq ...$ be  an ascending chain of ideals of $R(+)M$. Then there is  an ascending chain of ideals of $R$: $\pi(O^{\bullet}): \pi(O_1)\subseteq \pi(O_2)\subseteq ...$. Thus there is an element $s\in S$  which is independent of $O^{\bullet}$ satisfying that there exists $k\in \mathbb{Z}^{+}$ such that  $s\pi(O_n)\subseteq \pi(O_k)$ for any $n\geq k$. Similarly, $O^{\bullet}\cap 0(+)M:   O_1\cap 0(+)M\subseteq O_2\cap 0(+)M\subseteq ...$ is  an ascending chain of sub-ideals of $0(+)M$ which are equivalent to some submodules of $M$. So there is an element $s'\in S$  satisfying that there exists $k'\in \mathbb{Z}^{+}$ such that $s'O_n\cap 0(+)M\subseteq O_k\cap 0(+)M$ for any $n\geq k'$. Let $l=\max(k,k')$ and $n\geq l$. Consider the following natural commutative diagram with exact rows:
$$\xymatrix@R=20pt@C=25pt{
  0 \ar[r]^{}&O_l\cap 0(+)M \ar@{^{(}->}[d]\ar[r]&O_l \ar[r]\ar@{^{(}->}[d]&\pi(O_l)\ar[r] \ar@{^{(}->}[d] &0\\
0 \ar[r]^{}&O_n\cap 0(+)M \ar[r]&O_n \ar[r]&\pi(O_n) \ar[r] &0.\\}$$
Set $t=ss'$. Then we have $tO_n\subseteq O_l$ for any $n\geq l$ by Lemma \ref{s-exct-diag}. So $R(+)M$ is a $u$- $S(+)M$-Noetherian ring  by Theorem \ref{u-s-noe-char}.
\end{proof}

Let $\alpha: A\rightarrow C$ and $\beta: B\rightarrow C$  be ring homomorphisms.  Then the subring $$D:= \alpha \times_C \beta:= \{(a, b)\in  A\times B | \alpha(a) =\beta(b)\}$$ of $A\times B$ is called the \emph{pullback}  of $\alpha$ and $\beta$. Let $D$ be a pullback of $\alpha$ and $\beta$. Then there is a pullback diagram in the category of commutative rings:
$$\xymatrix@R=20pt@C=25pt{
D\ar[d]^{p_B}\ar[r]^{p_A}& A \ar[d]^{\alpha}\\
B\ar[r]^{\beta}&C. \\
}$$
If $S$ is a multiplicative subset of $D$, then it is easy to verify that $p_A(S):=\{p_A(s)\in A|s\in S\}$ is a  multiplicative subset of $A$. Now, we give a $u$-$S$-Noetherian property on the pullback diagram.

\begin{proposition}\label{pullback-usn} Let $\alpha: A\rightarrow C$ be a ring homomorphism and $\beta: B\rightarrow C$  a  surjective ring homomorphism. Let $D$ be the pullback of $\alpha$ and $\beta$. If $S$ is a multiplicative subset of $D$, then
the following assertions are equivalent:
 \begin{enumerate}
\item  $D$ is a $u$-$S$-Noetherian ring;
\item  $A$ is a $u$-$p_A(S)$-Noetherian ring and $\Ker(\beta)$ is a $u$-$S$-Noetherian $D$-module.
 \end{enumerate}
\end{proposition}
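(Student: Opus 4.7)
The plan is to exploit the surjectivity of $\beta$ to build a short exact sequence of $D$-modules connecting $A$, $D$, and $\Ker(\beta)$, and then apply Lemma \ref{s-u-noe-exact}. First I would check that the projection $p_A : D \to A$ is surjective: given $a \in A$, since $\beta$ is surjective there exists $b \in B$ with $\beta(b) = \alpha(a)$, so $(a,b) \in D$ and $p_A((a,b)) = a$. Its kernel is
$$\Ker(p_A) = \{(0,b) \in D : \beta(b) = 0\} = \{0\} \times \Ker(\beta),$$
and under the $D$-action $(d_1,d_2)\cdot(0,b) = (0, d_2 b)$, the obvious bijection identifies $\Ker(p_A)$ with $\Ker(\beta)$ as a $D$-module (where $D$ acts on $\Ker(\beta)$ through $p_B$). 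This produces a short exact sequence of $D$-modules
$$0 \longrightarrow \Ker(\beta) \longrightarrow D \xrightarrow{\ p_A\ } A \longrightarrow 0.$$

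By Lemma \ref{s-u-noe-exact}, $D$ is $u$-$S$-Noetherian as a $D$-module if and only if both $\Ker(\beta)$ and $A$ are $u$-$S$-Noetherian $D$-modules. Since a ring is $u$-$S$-Noetherian precisely when it is $u$-$S$-Noetherian as a module over itself, this reduces the proposition to showing that $A$ is $u$-$S$-Noetherian as a $D$-module if and only if $A$ is a $u$-$p_A(S)$-Noetherian ring.

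For this last equivalence, I would use that $p_A$ is a surjective ring homomorphism, so the $D$-submodule structure on $A$ coincides with the $A$-submodule structure: every ideal of $A$ is a $D$-submodule, and conversely every $D$-submodule is stable under multiplication by each $a = p_A(d) \in A$. A subset of $A$ generates the same subgroup as a $D$-submodule as it does as an $A$-submodule, so being finitely generated is the same in both senses. For $s \in S$ and an ideal $I$ of $A$, the $D$-action yields $s \cdot I = p_A(s) \cdot I$, so the choice of a uniform witness $s \in S$ for $A$-as-$D$-module corresponds to the witness $p_A(s) \in p_A(S)$ for $A$-as-a-ring; combining the two directions completes the proof.

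The main technical point, and the only real bookkeeping obstacle, is the identification $\Ker(p_A) \cong \Ker(\beta)$ as $D$-modules, together with the verification that $u$-$S$-Noetherianness of $A$ over $D$ matches $u$-$p_A(S)$-Noetherianness of $A$ as a ring; both amount to transporting the same "uniform $s$" along the surjection $p_A$, and no further ring-theoretic input about the pullback is needed beyond what is already packaged in Lemma \ref{s-u-noe-exact}.
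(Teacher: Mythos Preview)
Your proof is correct and follows essentially the same route as the paper: establish surjectivity of $p_A$, obtain the short exact sequence $0 \to \Ker(\beta) \to D \to A \to 0$ of $D$-modules, apply the exact-sequence lemma, and translate the $u$-$S$-Noetherian condition on $A$ as a $D$-module into the $u$-$p_A(S)$-Noetherian condition on $A$ as a ring via the surjection $p_A$. The only cosmetic difference is that you invoke Lemma~\ref{s-u-noe-exact} (for ordinary exact sequences) whereas the paper cites Proposition~\ref{s-u-noe-s-exact} (for $u$-$S$-exact sequences); since the sequence is genuinely exact, either reference suffices.
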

\begin{proof} Let $D$ be the pullback of $\alpha$ and $\beta$. Since $\beta$ is a  surjective ring homomorphism, so is $p_A$.  Then there is a short exact sequence  of $D$-modules:
$$0\rightarrow \Ker(\beta)\rightarrow D\rightarrow A\rightarrow 0.$$ By Proposition \ref{s-u-noe-s-exact}, $D$ is a $u$-$S$-Noetherian $D$-module if and only if $\Ker(\beta)$ and $A$ are $u$-$S$-Noetherian $D$-modules. Since $p_A$ is surjective, the $D$-submodules of $A$ are exactly the ideals of the ring $A$. Thus $A$ is a $u$-$S$-Noetherian $D$-module if and only if $A$ is a $u$-$p_A(S)$-Noetherian ring.
\end{proof}

Let $f:A\rightarrow B$ be a ring homomorphism and $J$ an ideal of $B$. Following from \cite{df09} the  \emph{amalgamation} of $A$ with $B$ along $J$ with respect to $f$, denoted by $A\bowtie^fJ$, is defined as $$A\bowtie^fJ=\{(a,f(a)+j)|a\in A,j\in J\},$$  which is  a subring of of $A \times B$.  Following from \cite[Proposition 4.2]{df09}, $A\bowtie^fJ$  is the pullback $\widehat{f}\times_{B/J}\pi$,
 where $\pi:B\rightarrow B/J$ is the natural epimorphism and $\widehat{f}=\pi\circ f$:
$$\xymatrix@R=20pt@C=25pt{
A\bowtie^fJ\ar[d]^{p_B}\ar[r]_{p_A}& A\ar[d]^{\widehat{f}}\\
B\ar[r]^{\pi}&B/J. \\
}$$
For a multiplicative subset $S$ of $A$, set $S^\prime:= \{(s, f (s)) | s \in S\},$ and $f(S):=\{f(s)\in B|s\in S\}$. Then it is easy to verify that $S^\prime$ and $f(S)$ are multiplicative subsets of $A\bowtie^fJ$  and $B$ respectively.

\begin{lemma} \label{s-u-noe-epi}
Let $\alpha:R\rightarrow R^\prime$ be a surjective ring homomorphism and $S$ a multiplicative subset of $R$. If $R$ is a  $u$-$S$-Noetherian ring, then $R^\prime$ is a $u$-$\alpha(S)$-Noetherian ring.
\end{lemma}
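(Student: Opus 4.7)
The plan is to transport the uniform bound witnessing $u$-$S$-Noetherianness of $R$ along $\alpha$ and show that $\alpha(s) \in \alpha(S)$ serves as a uniform bound for $R'$.

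First, I would invoke the definition to obtain a fixed element $s \in S$ such that, for every ideal $I$ of $R$, there exists a finitely generated sub-ideal $K \subseteq I$ with $sI \subseteq K$. Then, given an arbitrary ideal $I'$ of $R'$, I would pull it back via $I := \alpha^{-1}(I')$, which is an ideal of $R$. Applying the hypothesis yields a finitely generated sub-ideal $K \subseteq I$ with $sI \subseteq K$.

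Next, I would push this data forward through $\alpha$. Since $\alpha$ is surjective, $\alpha(I) = I'$, and the image $\alpha(K)$ is a finitely generated ideal of $R'$ (generators go to generators). Moreover, $\alpha(K) \subseteq \alpha(I) = I'$, so $\alpha(K)$ is a finitely generated sub-ideal of $I'$. The containment $sI \subseteq K$ yields, upon applying $\alpha$, the inclusion $\alpha(s)I' = \alpha(s)\alpha(I) = \alpha(sI) \subseteq \alpha(K)$. Hence $\alpha(s) \in \alpha(S)$ witnesses the $u$-$\alpha(S)$-Noetherian property of $R'$.

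There is no real obstacle here; the argument is essentially bookkeeping. The only point to be careful about is that the element witnessing uniformity in $R'$ must not depend on $I'$, but this is automatic since we fix $s$ once and for all at the start using the hypothesis on $R$, and then $\alpha(s)$ works uniformly for every $I'$.
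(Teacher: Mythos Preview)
Your proposal is correct and follows essentially the same approach as the paper's proof: fix the uniform witness $s\in S$, pull back an arbitrary ideal of $R'$ along the surjection $\alpha$, apply the $u$-$S$-Noetherian hypothesis to obtain a finitely generated sub-ideal, and push everything forward via $\alpha$ to conclude that $\alpha(s)$ is the required uniform witness in $R'$. The only difference is notational.
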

\begin{proof} Since $R$ is  $u$-$S$-Noetherian, there is an element $s\in S$ such that for any ideal $J$ of $R$, there exists a finitely generated sub-ideal $F_J$ of $J$ satisfying $sJ\subseteq F_J$. Let $I$ be an ideal of $R^\prime$. Since $\alpha:R\rightarrow R^\prime$ is a surjective ring homomorphism, there exists an ideal $\alpha^{-1}(I)$ of $R$ such that $\alpha(\alpha^{-1}(I))=I$. Thus there exists a finitely generated sub-ideal $F_{\alpha^{-1}(I)}$ of $\alpha^{-1}(I)$ satisfying $s\alpha^{-1}(I)\subseteq F_{\alpha^{-1}(I)}$.  So $\alpha(F_{\alpha^{-1}(I)})$ is a finitely generated sub-ideal  of $I$ satisfying $\alpha(s)I\subseteq \alpha(F_{\alpha^{-1}(I)})$.
\end{proof}

\begin{proposition}\label{amag-usn}  Let $f :A\rightarrow B$ be a ring homomorphism, $J$ an ideal of $B$ and $S$ a multiplicative subset of $A$. Set $S^\prime= \{(s, f (s)) | s \in S\}$ and $f(S)=\{f(s)\in B|s\in S\}$. Then the following statements
are equivalent:
 \begin{enumerate}
\item  $A\bowtie^fJ$ is a $u$-$S^\prime$-Noetherian ring;
\item  $A$ is a $u$-$S$-Noetherian ring and $J$ is a $u$-$S^\prime$-Noetherian $A\bowtie^fJ$-module $($with the $A\bowtie^fJ$-module structure naturally induced by $p_B$, where $p_B : A\bowtie^fJ\rightarrow B$ defined by $(a,f(a)+j)\rightarrow f(a)+j)$;
\item  $A$ is a $u$-$S$-Noetherian ring and $f(A)+J$ is a $u$-$f(S)$-Noetherian ring.
\end{enumerate}
\end{proposition}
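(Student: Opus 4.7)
The plan is to prove $(1)\Leftrightarrow(2)$ directly from Proposition \ref{pullback-usn}, then close the chain via $(1)\Rightarrow(3)$ and $(3)\Rightarrow(2)$, exploiting the ring surjection $p_B:A\bowtie^fJ\to f(A)+J$ together with the observation that the $A\bowtie^fJ$-module structure on $J$ factors through $p_B$.

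For $(1)\Leftrightarrow(2)$, I would apply Proposition \ref{pullback-usn} to the pullback diagram $A\bowtie^fJ=\widehat{f}\times_{B/J}\pi$, with $\pi:B\to B/J$ surjective. Routine identifications give $p_A(S')=S$ and $\Ker(\pi)=J$, so the proposition yields $(1)\Leftrightarrow(2)$ verbatim. For $(1)\Rightarrow(3)$, the $A$-part of $(3)$ is already furnished by $(2)$. For the second half, I observe that the image of $p_B$ is $\{f(a)+j:a\in A,\,j\in J\}=f(A)+J$, so $p_B:A\bowtie^fJ\twoheadrightarrow f(A)+J$ is a surjective ring homomorphism with $p_B(S')=f(S)$; Lemma \ref{s-u-noe-epi} then supplies that $f(A)+J$ is $u$-$f(S)$-Noetherian.

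The interesting direction is $(3)\Rightarrow(2)$, where I must upgrade the ring-theoretic hypothesis on $f(A)+J$ into a module-theoretic statement for $J$ over $A\bowtie^fJ$. The key observation is that for any $(a,f(a)+j)\in A\bowtie^fJ$ and $j'\in J$ one has $(a,f(a)+j)\cdot j'=(f(a)+j)j'=p_B(a,f(a)+j)\cdot j'$, so the $A\bowtie^fJ$-action on $J$ factors through the surjection $p_B$. Consequently, the $A\bowtie^fJ$-submodules of $J$ are precisely the $(f(A)+J)$-submodules of $J$, which, since $J$ is an ideal of $f(A)+J$, coincide with the ideals of $f(A)+J$ contained in $J$. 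Moreover, a finitely generated ideal $F=(f(A)+J)x_1+\cdots+(f(A)+J)x_n$ of $f(A)+J$ inside $J$ is also finitely generated as an $A\bowtie^fJ$-submodule on the same generators, again because $p_B$ is surjective.

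With this dictionary the conclusion is immediate: by $u$-$f(S)$-Noetherianness there exists $s\in S$ such that every ideal $N$ of $f(A)+J$ admits a finitely generated sub-ideal $F\subseteq N$ with $f(s)N\subseteq F$; taking any $A\bowtie^fJ$-submodule $N$ of $J$, the element $(s,f(s))\in S'$ acts on $N$ as multiplication by $f(s)$, hence $(s,f(s))\cdot N=f(s)N\subseteq F$, and $F$ is finitely generated over $A\bowtie^fJ$. Thus $J$ is $u$-$S'$-Noetherian, establishing $(3)\Rightarrow(2)$. The main obstacle I anticipate is precisely this translation between ideals of the subring $f(A)+J$ and $A\bowtie^fJ$-submodules of $J$; once the factorisation of the action through $p_B$ is recognised, the rest is bookkeeping.
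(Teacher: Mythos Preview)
Your proposal is correct and follows essentially the same route as the paper. The equivalence $(1)\Leftrightarrow(2)$ via Proposition~\ref{pullback-usn} and the argument for $(3)\Rightarrow(2)$ via the factorisation of the $A\bowtie^fJ$-action on $J$ through $p_B$ match the paper's proof almost verbatim; for $(1)\Rightarrow(3)$ you invoke Lemma~\ref{s-u-noe-epi} on the surjection $p_B:A\bowtie^fJ\twoheadrightarrow f(A)+J$, whereas the paper reaches the same conclusion via the exact sequence $0\to f^{-1}(J)\times\{0\}\to A\bowtie^fJ\to f(A)+J\to 0$ and Proposition~\ref{s-u-noe-s-exact}, but these are interchangeable and your version is arguably more direct.
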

\begin{proof}  $(1)\Leftrightarrow(2)$ Follows from  Proposition \ref{pullback-usn}.

 $(1)\Rightarrow(3)$: By Proposition \ref{pullback-usn}, $A$ is a $u$-$S$-Noetherian ring. By \cite[Proposition 5.1]{df09}, there is a short exact sequence $0\rightarrow f^{-1}(J)\times \{0\}\rightarrow A\bowtie^fJ \rightarrow f(A)+J\rightarrow 0$ of $A\bowtie^fJ$-modules.  Note that any $A\bowtie^fJ$-submodule of  $f(A)+J$ is exactly an ideal of $f(A)+J$. Since $p_B(S^\prime)=f(S),$  we conclude that  $f(A)+J$ is a $u$- $f(S)$-Noetherian ring by Proposition \ref{s-u-noe-s-exact}.

$(3)\Rightarrow(2)$:  Let $f(s)$ be an element in $f(S)$ such that for any ideal of $f(A)+J$   is $u$- $f(S)$-Noetherian with respective to $f(s)$. Then for any  $A\bowtie^fJ$-submodule $J_0$ of $J$, $J_0$ is an ideal of $f(A)+J$ since every $A\bowtie^fJ$-submodule of $J$ is an ideal of $f(A)+J$. Since $f(A)+J$ is $f(S)$-Noetherian, there exists $j_1,...,j_k\in J_0$ such that $f(s)J_0\subseteq \langle j_1,...,j_k\rangle (f(A)+J)\subseteq J_0$. Hence we obtain
  $$(s,f(s))J_0\subseteq A\bowtie^fJ j_1+...+A\bowtie^fJ j_k\subseteq J_0.$$
Thus $J$ is $u$-$S^\prime$-Noetherian with respective to  $(s,f(s)).$
\end{proof}

\section{Cartan-Eilenberg-Bass Theorem for uniformly $S$-Noetherian rings}
It is well known that  an $R$-module $E$ is \emph{injective} provided that the induced sequence $0\rightarrow \Hom_R(C,E)\rightarrow \Hom_R(B,E)\rightarrow \Hom_R(A,E)\rightarrow 0$ is exact for any exact sequence $0\rightarrow A\rightarrow B\rightarrow C\rightarrow 0$. The well-known Cartan-Eilenberg-Bass Theorem says that a ring $R$ is Noetherian if and only if any direct sum of  injective modules  is injective (see \cite[Theorem 3.1.17]{ej11}). In order to obtain the Cartan-Eilenberg-Bass Theorem for uniformly $S$-Noetherian rings, we first introduce the $S$-analogue of  injective modules.

\begin{definition} Let $R$ be a ring and $S$ a multiplicative subset of $R$. An $R$-module $E$ is called $u$-$S$-injective $($abbreviates uniformly $S$-injective$)$ provided that the induced sequence $$0\rightarrow \Hom_R(C,E)\rightarrow \Hom_R(B,E)\rightarrow \Hom_R(A,E)\rightarrow 0$$ is $u$-$S$-exact for any $u$-$S$-exact sequence $0\rightarrow A\rightarrow B\rightarrow C\rightarrow 0$.
\end{definition}

\begin{lemma}\label{u-S-tor-ext} Let  $R$ be a ring and $S$ a multiplicative subset of $R$. If $T$ is a $u$-$S$-torsion module, then $\Ext_R^{n}(T,M)$ and $\Ext_R^{n}(M,T)$ are $u$-$S$-torsion for any $R$-module $M$ and any $n\geq 0$.
\end{lemma}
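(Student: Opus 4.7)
The plan is to exploit the fact that $\Ext_R^n(-,-)$ is $R$-linear in each variable, so that multiplication by a scalar on either entry is the same as multiplication by that scalar on the Ext group. Since $T$ is $u$-$S$-torsion, there exists a single element $s\in S$ with $sT=0$, and this one $s$ will annihilate both $\Ext_R^n(T,M)$ and $\Ext_R^n(M,T)$.

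More concretely, first I would choose $s\in S$ such that $sT=0$, i.e., the multiplication map $\mu_s^T:T\to T$ defined by $t\mapsto st$ is the zero endomorphism. Then for the first assertion I would apply the contravariant functor $\Ext_R^n(-,M)$ to $\mu_s^T$ and observe that $\Ext_R^n(\mu_s^T,M)$ coincides with multiplication by $s$ on $\Ext_R^n(T,M)$; since $\mu_s^T=0$, functoriality forces this map to be zero, so $s\cdot\Ext_R^n(T,M)=0$. The argument for $\Ext_R^n(M,T)$ is symmetric: apply the covariant functor $\Ext_R^n(M,-)$ to $\mu_s^T$, use that the induced endomorphism is again multiplication by $s$, and conclude $s\cdot\Ext_R^n(M,T)=0$.

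The only point that requires a moment's care is justifying the identification of $\Ext_R^n(\mu_s^T,M)$ (respectively $\Ext_R^n(M,\mu_s^T)$) with the scalar multiplication by $s$ on the Ext group. For $n=0$ this is immediate from the definition of $\Hom_R$, and for general $n$ it follows by computing Ext via a projective resolution $P_\bullet\to T$ (or an injective resolution $T\to E^\bullet$) and noting that lifting $\mu_s^T$ to the resolution can be taken to be multiplication by $s$ on each $P_i$ (respectively $E^i$), which induces multiplication by $s$ on cohomology. I do not anticipate this step to be a genuine obstacle—it is the standard fact that Ext is a bi-$R$-module functor—so the proof is essentially a one-line application of $R$-linearity of Ext, delivered uniformly in $M$ and $n$ by the one element $s$ provided by $u$-$S$-torsion.
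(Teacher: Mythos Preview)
Your proof is correct, but it takes a different route from the paper's. The paper argues case by case in $n$: it checks $s\Hom_R(T,M)=0$ directly, then uses a short exact sequence $0\to M\to E\to \Omega^{-1}(M)\to 0$ with $E$ injective to exhibit $\Ext_R^1(T,M)$ as a quotient of $\Hom_R(T,\Omega^{-1}(M))$, and finally handles $n\ge 2$ by dimension shifting $\Ext_R^n(T,M)\cong \Ext_R^1(T,\Omega^{-(n-1)}(M))$; the second variable is declared ``similar.'' Your argument instead invokes the $R$-bilinearity of $\Ext_R^n(-,-)$ once and for all: since $\mu_s^T=0$, functoriality forces $s\cdot\Ext_R^n(T,M)=\Ext_R^n(\mu_s^T,M)=0$ and likewise in the second variable. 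Your approach is shorter and treats all $n$ and both variables uniformly with a single $s$; the paper's approach is more elementary in that it never appeals to the (standard but not entirely trivial) fact that the induced map on $\Ext$ agrees with scalar multiplication, relying only on exact sequences and the vanishing of $\Ext$ against injectives. Either is perfectly adequate here.
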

\begin{proof} We only prove $\Ext_R^{n}(T,M)$ is  $u$-$S$-torsion, Since the case of $\Ext_R^{n}(M,T)$ is similar. Let $T$ be a $u$-$S$-torsion module with $sT=0$. If $n=0$, then for any $f\in \Hom_R(T,M)$, we have $sf(t)=f(st)=0$ for any $t\in T$. Thus $sf=0$ and so $s\Hom_R(T,M)=0$. Let $0\rightarrow M\rightarrow E\rightarrow \Omega^{-1} (M)\rightarrow0$ be a short exact sequence with $E$ injective and $\Omega^{-1} (M)$ the $1$-st cosyzygy of $M$. Then $\Ext_R^{1}(T,M)$ is a quotient of $\Hom_R(T,\Omega^{-1} (M))$ which is $u$-$S$-torsion. Thus $\Ext_R^{1}(T,M)$ is $u$-$S$-torsion. For $n\geq 2$, we have an isomorphism $\Ext_R^{n}(T,M)\cong \Ext_R^{1}(T,\Omega^{-(n-1)}(M))$ where $\Omega^{-(n-1)}(M)$ is the $(n-1)$-th cosyzygy of $M$. Since $\Ext_R^{1}(T,\Omega^{-(n-1)}(M))$ is  $u$-$S$-torsion by induction, $\Ext_R^{n}(T,M)$ is  $u$-$S$-torsion.
\end{proof}

\begin{theorem}\label{s-inj-ext}
Let $R$ be a ring, $S$ a multiplicative subset of $R$ and $E$ an $R$-module. Then the following assertions are equivalent:
\begin{enumerate}
\item  $E$ is  $u$-$S$-injective;

\item for any short exact sequence $0\rightarrow A\xrightarrow{f} B\xrightarrow{g} C\rightarrow 0$, the induced sequence $0\rightarrow \Hom_R(C,E)\xrightarrow{g^\ast} \Hom_R(B,E)\xrightarrow{f^\ast} \Hom_R(A,E)\rightarrow 0$ is  $u$-$S$-exact;

\item  $\Ext_R^1(M,E)$ is  $u$-$S$-torsion for any  $R$-module $M$;

\item  $\Ext_R^n(M,E)$ is  $u$-$S$-torsion for any  $R$-module $M$ and $n\geq 1$.

\end{enumerate}
\end{theorem}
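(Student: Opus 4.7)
The plan is to prove the cycle $(1) \Rightarrow (2) \Rightarrow (3) \Rightarrow (4) \Rightarrow (1)$; the first three implications are essentially formal, and the last is where the substantive work happens. The implication $(1) \Rightarrow (2)$ will be immediate, since any ordinary short exact sequence is tautologically $u$-$S$-exact. For $(2) \Rightarrow (3)$, I would choose a free presentation $0 \to K \to F \to M \to 0$ of $M$ and apply hypothesis $(2)$; the resulting $u$-$S$-torsion cokernel of $\Hom_R(F,E) \to \Hom_R(K,E)$ is exactly $\Ext^1_R(M,E)$. For $(3) \Rightarrow (4)$, I would use dimension shifting along $0 \to \Omega M \to F \to M \to 0$ with $F$ free, using the isomorphism $\Ext^{n+1}_R(M,E) \cong \Ext^n_R(\Omega M, E)$ for $n \geq 1$ and induction on $n$.

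The main effort is in $(4) \Rightarrow (1)$. I would start with a $u$-$S$-exact sequence $0 \to A \xrightarrow{f} B \xrightarrow{g} C \to 0$ with witness $s$, write $I := \Im(f)$ and $K := \Ker(g)$, so that $sI \subseteq K$, $sK \subseteq I$, $s \Ker(f) = 0$, and $s \Coker(g) = 0$, and produce a single element of $S$ witnessing $u$-$S$-exactness of the induced sequence at each of its four positions. Two positions are formal: if $\phi g = 0$ and $c \in C$, then $sc \in \Im(g)$ forces $s\phi(c) = \phi(sc) = 0$, so $s$ annihilates $\Ker(g^*)$; and for $g^*(\phi) = \phi g$ the calculation $s (\phi g) f(a) = \phi(g(sf(a))) = 0$ (using $sI \subseteq K$) yields $s \Im(g^*) \subseteq \Ker(f^*)$.

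The remaining two positions---the $u$-$S$-torsionness of $\Coker(f^*)$ and the inclusion $t \Ker(f^*) \subseteq \Im(g^*)$ for some $t \in S$---both amount to extending a partially defined homomorphism to an ambient module, and this will be the main obstacle. For $\Coker(f^*)$: given $\psi : A \to E$, the map $s\psi$ kills $\Ker(f)$ and thus descends to $\bar\psi : I \to E$; the obstruction to lifting $\bar\psi$ along the ordinary exact sequence $0 \to I \to B \to B/I \to 0$ will lie in $\Ext^1_R(B/I, E)$, and hypothesis $(4)$ applied to the \emph{arbitrary} module $B/I$ provides an element $t_1 \in S$ annihilating this obstruction, yielding $t_1 s \psi \in \Im(f^*)$. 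Symmetrically, given $h : B \to E$ with $hf = 0$, the identity $sK \subseteq I$ combined with $h|_I = 0$ will give $sh|_K = 0$, so $sh$ descends to $\tilde h : \Im(g) \to E$; then the obstruction to lifting $\tilde h$ along $0 \to \Im(g) \to C \to \Coker(g) \to 0$ lies in $\Ext^1_R(\Coker(g), E)$, which is $u$-$S$-torsion by Lemma \ref{u-S-tor-ext} directly (without needing the full strength of $(4)$) since $\Coker(g)$ is already $u$-$S$-torsion. Multiplying $s$, $t_1$, and $t_2$ together will produce the single witness required, completing the cycle.
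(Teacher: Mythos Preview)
Your proposal is correct and parallels the paper's argument closely. The only organizational differences are that the paper closes the cycle via $(2)\Rightarrow(1)$ rather than $(4)\Rightarrow(1)$, and at the step where you invoke the obstruction in $\Ext^1_R(B/I,E)$ directly to extend $\bar\psi$, the paper instead applies hypothesis~$(2)$ to the ordinary exact sequence $0\to\Im(f)\to B\to\Coker(f)\to 0$ and packages the extension step in a small pushout diagram; the underlying ideas are the same.
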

\begin{proof} $(1)\Rightarrow(2)$ and $(4)\Rightarrow(3)$: Trivial.

$(2)\Rightarrow(3)$:  Let $0\rightarrow L\rightarrow P\rightarrow M\rightarrow 0$ be a short exact sequence with $P$ projective. Then there exists a long exact sequence  $0\rightarrow \Hom_R(M,E)\rightarrow \Hom_R(P,E)\rightarrow \Hom_R(L,E)\rightarrow \Ext_R^1(M,E) \rightarrow 0$.  Thus  $\Ext_R^1(M,E)$ is  $u$-$S$-torsion by $(2)$.

$(3)\Rightarrow (2)$: Let  $0\rightarrow A\xrightarrow{f} B\xrightarrow{g} C\rightarrow 0$ be a short exact sequence. Then we have a long exact sequence $0\rightarrow \Hom_R(C,E)\xrightarrow{g^\ast} \Hom_R(B,E)\xrightarrow{f^\ast} \Hom_R(A,E)\xrightarrow{\delta} \Ext_R^1(C,E) \rightarrow 0$.   By  $(3)$, $\Ext_R^1(C,E)$ is  $u$-$S$-torsion,  and so $0\rightarrow \Hom_R(C,E)\xrightarrow{g^\ast} \Hom_R(B,E)\xrightarrow{f^\ast} \Hom_R(A,E)\rightarrow 0$ is  $u$-$S$-exact.

$(3)\Rightarrow(4)$:  Let $M$ be an $R$-module. Denote by $\Omega^{n-1}(M)$ the $(n-1)$-th syzygy of $M$. Then $\Ext_R^n(M,E)\cong \Ext_R^1(\Omega^{n-1}(M),E)$ is $u$-$S$-torsion by $(3)$.

$(2)\Rightarrow(1)$: Let $E$ be an $R$-module satisfies $(2)$.  Suppose $0\rightarrow A\xrightarrow{f} B\xrightarrow{g} C\rightarrow 0$ is a $u$-$S$-exact sequence. Then there is an exact sequence  $B\xrightarrow{g} C\rightarrow T\rightarrow 0 $ where $T=\Coker(g)$ is $u$-$S$-torsion. Then we have an exact sequence $$0\rightarrow \Hom_R(T,E)\rightarrow \Hom_R(C,E)\rightarrow \Hom_R(B,E).$$
By Lemma \ref{u-S-tor-ext}, we have $ \Hom_R(T,E)$ is $u$-$S$-torsion. So $0\rightarrow \Hom_R(C,E)\xrightarrow{g^\ast} \Hom_R(B,E)\xrightarrow{f^\ast} \Hom_R(A,E)\rightarrow 0$  is  $u$-$S$-exact at $\Hom_R(C,E)$.

There are also two short exact sequences:
\begin{center}
$0\rightarrow \Ker(f)\xrightarrow{i_{A}} A\xrightarrow{\pi_{\Im(f)}} \Im(f)\rightarrow 0$ and $0\rightarrow \Im(f)\xrightarrow{i_B} B\rightarrow \Coker(f)\rightarrow 0,$
\end{center}
where $\Ker(f)$ is $u$-$S$-torsion. Consider the  induced exact sequences  $$0\rightarrow  \Hom_R(\Im(f),E)\xrightarrow{\pi_{\Im(f)}^\ast}  \Hom_R(A,E)\xrightarrow{i_{A}^\ast} \Hom_R(\Ker(f),E)$$ and
$$0\rightarrow \Hom_R(\Coker(f),E)\rightarrow \Hom_R(B,E)\xrightarrow{i_{B}^\ast}  \Hom_R(\Im(f),E).$$ Then  $\Im(i_{A}^\ast)$ and $\Coker(i_{B}^\ast)$ are all $u$-$S$-torsion.
We have the following pushout diagram:

$$\xymatrix@R=20pt@C=25pt{ & 0\ar[d]&0\ar[d]&&\\
 & \Im(i_{B}^\ast)\ar[d]\ar@{=}[r]^{} &\Im(i_{B}^\ast)\ar[d]& &  \\
  0 \ar[r]^{}& \Hom_R(\Im(f),E)\ar[d]\ar[r]& \Hom_R(A,E) \ar[r]\ar[d]&\Im(i_{A}^\ast)\ar[r] \ar@{=}[d] &0\\
0 \ar[r]^{}&\Coker(i_{B}^\ast)\ar[d]\ar[r]&Y \ar[r]\ar[d]&\Im(i_{A}^\ast)\ar[r] &0\\
 & 0 &0 & &\\}$$
Since $\Im(i_{A}^\ast)$ and $\Coker(i_{B}^\ast)$ are all $u$-$S$-torsion, $Y$ is also $u$-$S$-torsion by Lemma \ref{s-exct-tor}. Thus the natural composition $f^\ast: \Hom_R(B,E)\rightarrow \Im(i_{B}^\ast)\rightarrow  \Hom_R(A,E)$ is a $u$-$S$-epimorphism. So $0\rightarrow \Hom_R(C,E)\xrightarrow{g^\ast} \Hom_R(B,E)\xrightarrow{f^\ast} \Hom_R(A,E)\rightarrow 0$ is  $u$-$S$-exact at $\Hom_R(A,E)$.

Since the sequence $0\rightarrow A\xrightarrow{f} B\xrightarrow{g} C\rightarrow 0$ is $u$-$S$-exact at $B$ and $C$, there exists  $s\in S$ such that   $s\Ker(g)\subseteq \Im(f)$, $s\Im(f)\subseteq \Ker(g)$ and $s\Coker(g)=0$. We claim that $s^2\Im (g^\ast)\subseteq \Ker(f^\ast)$ and $s^2\Ker(f^\ast)\subseteq \Im (g^\ast)$. Indeed, consider the following diagram:
$$\xymatrix@R=20pt@C=25pt{
  & & E& &\\
0 \ar[r]^{}&A\ar[r]^{f}&B  \ar[u]^{h}\ar[r]^{g}&C\ar[r] &0\\}$$
Suppose $h\in \Im (g^\ast)$. Then there exists $u\in \Hom_R(C,E)$ such that $h=u\circ g$. Thus for any $a\in A$, $sh\circ f (a)=su\circ g\circ f(a)=u\circ g\circ sf(a)=0$ since $s\Im(f)\subseteq \Ker(g)$. So $sh\circ f=0$ and then  $s\Im (g^\ast)\subseteq \Ker(f^\ast)$. Thus $s^2\Im (g^\ast)\subseteq \Ker(f^\ast)$.   Now, suppose $h\in \Ker(f^\ast)$. Then $h\circ f=0$. Thus $\Ker(h)\supseteq \Im(f)\supseteq s\Ker(g)$. So $sh\circ i_{\Ker(g)}=0$ where $i_{\Ker(g)}:\Ker(g)\hookrightarrow B$ is the natural embedding map. There is a well-defined $R$-homomorphism $v:\Im(g)\rightarrow E$ such that $v\circ \pi_B=sh$, where $\pi_B$ is the natural epimorphism $B\twoheadrightarrow \Im(g)$. Consider the exact sequence $\Hom_R(\Coker(g),E)\rightarrow \Hom_R(C,E)\rightarrow \Hom_R(\Im(g),E)\rightarrow \Ext_R^1(\Coker(g),E)$ induced by $0\rightarrow \Im(g)\rightarrow C\rightarrow \Coker(g)\rightarrow 0$. Since $s\Hom_R(\Coker(g),E)=s \Ext_R^1(\Coker(g),E)=0$, $s\Hom_R(\Im(g),E)\subseteq i_{\Im(g)}^\ast(\Hom_R(C,E))$.  Thus there is a homomorphism $u:C\rightarrow E$ such that $s^2h=v\circ g$. Then we have $s^2\Ker(f^\ast)\subseteq \Im (g^\ast)$. So $0\rightarrow \Hom_R(C,E)\xrightarrow{g^\ast} \Hom_R(B,E)\xrightarrow{f^\ast} \Hom_R(A,E)\rightarrow 0$ is  $u$-$S$-exact at $\Hom_R(B,E)$.
\end{proof}

It follows from Theorem \ref{s-inj-ext} that  $u$-$S$-torsion modules and injective modules are $u$-$S$-injective.
\begin{corollary}\label{inj-ust-s-inj}
Let $R$ be a ring and $S$ a multiplicative subset of $R$. Suppose $E$ is a $u$-$S$-torsion $R$-module or an injective $R$-module. Then $E$ is $u$-$S$-injective.
\end{corollary}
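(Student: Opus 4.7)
The plan is to reduce both cases to the $\Ext$-characterization of $u$-$S$-injectivity established in Theorem~\ref{s-inj-ext}, namely that $E$ is $u$-$S$-injective if and only if $\Ext_R^1(M,E)$ is $u$-$S$-torsion for every $R$-module $M$. Once this equivalence is in hand, neither case requires any further construction; each is a one-line verification.

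First I would dispose of the injective case. If $E$ is injective in the classical sense, then $\Ext_R^1(M,E)=0$ for every $R$-module $M$, and the zero module is trivially $u$-$S$-torsion (annihilated by $1\in S$). Hence the condition in Theorem~\ref{s-inj-ext}(3) is met, and $E$ is $u$-$S$-injective.

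Next I would handle the $u$-$S$-torsion case by invoking Lemma~\ref{u-S-tor-ext}. That lemma asserts, in its second assertion, that whenever $T$ is $u$-$S$-torsion, the group $\Ext_R^n(M,T)$ is $u$-$S$-torsion for every $R$-module $M$ and every $n\geq 0$. Taking $T=E$ and $n=1$, we conclude that $\Ext_R^1(M,E)$ is $u$-$S$-torsion for all $M$, so again Theorem~\ref{s-inj-ext}(3) applies.

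There is essentially no obstacle here; the entire content is packaged in Theorem~\ref{s-inj-ext} and Lemma~\ref{u-S-tor-ext}. The only point worth remarking in the write-up is that ``injective'' really does force the $\Ext$ groups to vanish identically (so that the same $s=1$ works uniformly in $M$), which is what makes the injective case a genuine instance of the $u$-$S$-notion rather than a borderline case requiring extra care.
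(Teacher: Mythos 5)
Your proof is correct and takes essentially the same approach as the paper, which derives the corollary directly from the $\Ext$-characterization in Theorem~\ref{s-inj-ext}: for injective $E$ one has $\Ext_R^1(M,E)=0$, and for $u$-$S$-torsion $E$ one applies Lemma~\ref{u-S-tor-ext}. Nothing further is needed.
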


The following example shows that the condition ``$\Ext_R^1(M,F)$ is  $u$-$S$-torsion for any  $R$-module $M$'' in Theorem \ref{s-inj-ext} can not be replaced by ``$\Ext_R^1(R/I,F)$ is  $u$-$S$-torsion for any  ideal $I$ of  $R$''.
\begin{example}\label{uf not-extsion}
Let $R=\mathbb{Z}$ be the ring of integers, $p$ a prime in $\mathbb{Z}$ and  $S=\{p^n|n\geq 0\}$. Let $J_p$ be the additive group of all $p$-adic integers $($see \cite{FS15} for example$)$. Then $\Ext_{R}^1(R/I,J_p)$ is $u$-$S$-torsion for any  ideal $I$ of  $R$. However, $J_p$ is not  $u$-$S$-injective.
\end{example}
\begin{proof}
Let $\langle n\rangle$ be an ideal of $\mathbb{Z}$. Suppose $n=p^km$ with $(p,m)=1$. Then $\Ext_{\mathbb{Z}}^1(\mathbb{Z}/\langle n\rangle, J_p)\cong J_p/nJ_p\cong \mathbb{Z}/\langle p^k\rangle$ by \cite[Exercise 1.3(10)]{FS15}. So $\Ext_{\mathbb{Z}}^1(\mathbb{Z}/\langle n\rangle, J_p)$ is $u$-$S$-torsion for any  ideal $\langle n\rangle$ of  $\mathbb{Z}$. However,  $J_p$ is not  $u$-$S$-injective. Indeed, let $\mathbb{Z}(p^{\infty})$ be the quasi-cyclic group (see \cite{FS15} for example). Then $\mathbb{Z}(p^{\infty})$ is a divisible group and $J_p\cong\Hom_{\mathbb{Z}}(\mathbb{Z}(p^{\infty}),\mathbb{Z}(p^{\infty})))$. So
\begin{align*}
  &\Ext_{\mathbb{Z}}^1(\mathbb{Z}(p^{\infty}), M) \\
 \cong &\Ext_{\mathbb{Z}}^1(\mathbb{Z}(p^{\infty}), \Hom_{\mathbb{Z}}(\mathbb{Z}(p^{\infty}),\mathbb{Z}(p^{\infty}))) \\
   \cong&\Hom_{\mathbb{Z}}(\Tor_1^{\mathbb{Z}}(\mathbb{Z}(p^{\infty}),\mathbb{Z}(p^{\infty})),\mathbb{Z}(p^{\infty}))\\
   \cong&\Hom_{\mathbb{Z}}(\mathbb{Z}(p^{\infty}),\mathbb{Z}(p^{\infty}))\cong J_p.
\end{align*}
Note that for any $p^k\in S$, we have $p^kJ_p\not=0$. So $J_p$ is not $u$-$S$-injective.
\end{proof}

\begin{remark}\label{uf not-dprod} It is well known that any direct  product of injective modules is injective. However, the direct  product of $u$-$S$-injective modules need not be $u$-$S$-injective. Indeed, Let $R$ and $S$ be in Example \ref{uf not-extsion}. Let $\mathbb{Z}/\langle p^k\rangle$ be cyclic group of order $p^k$ ($k\geq 1$). Then each $\mathbb{Z}/\langle p^k\rangle$ is  $u$-$S$-torsion, and thus is $u$-$S$-injective. Let $\mathbb{Q}$ be the rational number group. Then, by \cite[Chapter 9 Theorem 6.2]{FS15}, we have
$\Ext_{\mathbb{Z}}(\mathbb{Q}/\mathbb{Z},\prod\limits_{k=1}^\infty \mathbb{Z}/\langle p^k\rangle)\cong \prod\limits_{k=1}^\infty \Ext_{\mathbb{Z}}^1(\mathbb{Q}/\mathbb{Z}, \mathbb{Z}/\langle p^k\rangle)\cong  \prod\limits_{k=1}^\infty \mathbb{Z}/\langle p^k\rangle$ since each $\mathbb{Z}/\langle p^k\rangle$ is a reduced cotorsion group. It is easy to verify that $\prod\limits_{k=1}^\infty \mathbb{Z}/\langle p^k\rangle$ is not  $u$-$S$-torsion. So $\prod\limits_{k=1}^\infty \mathbb{Z}/\langle p^k\rangle$ is not $u$-$S$-injective.
\end{remark}

\begin{proposition}\label{s-inj-prop}
Let $R$ be a ring and $S$ a multiplicative subset of $R$. Then the following assertions hold.
\begin{enumerate}
\item Any finite direct sum of  $u$-$S$-injective modules is  $u$-$S$-injective.
\item Let $0\rightarrow A\xrightarrow{f} B\xrightarrow{g} C\rightarrow 0$  be a $u$-$S$-exact sequence. If $A$ and $C$ are  $u$-$S$-injective modules, so is $B$.
\item  Let $A\rightarrow B$ be a $u$-$S$-isomorphism. If one of $A$ and $B$ is  $u$-$S$-injective, so is the other.
\item Let $0\rightarrow A\xrightarrow{f} B\xrightarrow{g} C\rightarrow 0$  be a $u$-$S$-exact sequence.  If $A$ and $B$ are  $u$-$S$-injective, then $C$ is  $u$-$S$-injective.
\end{enumerate}
\end{proposition}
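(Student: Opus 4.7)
The plan is to reduce all four parts to the Ext-characterization of Theorem \ref{s-inj-ext}: an $R$-module $E$ is $u$-$S$-injective if and only if $\Ext_R^1(M,E)$ is $u$-$S$-torsion for every $R$-module $M$. The key technical observation I will invoke repeatedly is that in any four-term exact sequence $T' \to U \to V \to T''$ with $T'$ and $T''$ both $u$-$S$-torsion, $U$ is $u$-$S$-torsion if and only if $V$ is, since $V$ is then an extension of a submodule of $T''$ by a quotient of $U$ (apply Lemma \ref{s-exct-tor}).

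Part (1) is immediate from the natural isomorphism $\Ext_R^1(M, A\oplus B) \cong \Ext_R^1(M,A) \oplus \Ext_R^1(M,B)$: at any fixed $M$, if $s_A, s_B \in S$ annihilate the two factors then $s_A s_B \in S$ annihilates the direct sum, so $\Ext_R^1(M, A \oplus B)$ is $u$-$S$-torsion; the finite case follows by induction.

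For parts (2) and (4), I will slice the $u$-$S$-exact sequence $0 \to A \xrightarrow{f} B \xrightarrow{g} C \to 0$ into honest short exact sequences joined by $u$-$S$-torsion bridges. Setting $L = \Im(f) + \Ker(g)$, the conditions $s\Im(f) \subseteq \Ker(g)$ and $s\Ker(g) \subseteq \Im(f)$ force both $L/\Im(f)$ and $L/\Ker(g)$ to be $u$-$S$-torsion, and $\Ker(f), \Coker(g)$ are $u$-$S$-torsion by hypothesis. This yields the chain of short exact sequences
\[
0 \to \Ker(f) \to A \to \Im(f) \to 0, \quad 0 \to \Im(f) \to L \to L/\Im(f) \to 0,
\]
\[
0 \to \Ker(g) \to L \to L/\Ker(g) \to 0, \quad 0 \to \Ker(g) \to B \to \Im(g) \to 0,
\]
\[
0 \to \Im(g) \to C \to \Coker(g) \to 0.
\]
By Lemma \ref{u-S-tor-ext}, the Ext-modules of every $u$-$S$-torsion piece appearing above are $u$-$S$-torsion in all degrees. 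Chasing the long exact sequences of $\Ext_R^n(M,-)$ for $n=1,2$ and applying the first-paragraph observation step by step propagates the $u$-$S$-torsion property of $\Ext_R^1(M,-)$ between neighboring entries in the chain $A, \Im(f), L, \Ker(g), B, \Im(g), C$. Consequently, $u$-$S$-torsion of any two of $\Ext_R^1(M,A), \Ext_R^1(M,B), \Ext_R^1(M,C)$ forces the third, establishing both (2) and (4) at once.

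For part (3), a $u$-$S$-isomorphism $\alpha\colon A \to B$ has $u$-$S$-torsion kernel and cokernel, so the honest short exact sequences $0 \to \Ker(\alpha) \to A \to \Im(\alpha) \to 0$ and $0 \to \Im(\alpha) \to B \to \Coker(\alpha) \to 0$ allow the same propagation to identify $u$-$S$-torsion of $\Ext_R^1(M,A)$ with $u$-$S$-torsion of $\Ext_R^1(M,B)$. The main obstacle throughout is that $\Im(f)$ and $\Ker(g)$ need not coincide in a $u$-$S$-exact sequence; introducing the auxiliary module $L = \Im(f)+\Ker(g)$ is what bridges them through $u$-$S$-torsion quotients, at the cost of the two extra long-exact-sequence chases required to push torsion across the bridges.
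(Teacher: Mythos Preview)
Your argument is correct and follows essentially the same route as the paper: both reduce to the Ext-characterization of Theorem~\ref{s-inj-ext}, decompose the $u$-$S$-exact sequence into genuine short exact sequences via $\Ker(f)$, $\Im(f)$, $\Ker(g)$, $\Im(g)$, $\Coker(g)$, and bridge $\Im(f)$ and $\Ker(g)$ through their sum $L=\Im(f)+\Ker(g)$ (the paper calls it $N$). The only organizational differences are that the paper derives (3) from (2) by padding with zeros, whereas you argue (3) directly; and the paper treats (2) and (4) separately while you unify them. One small point worth making explicit: in your treatment of (4), the step from $B$ to $\Im(g)$ requires $\Ext_R^2(M,\Ker(g))$ to be $u$-$S$-torsion, which in turn requires $\Ext_R^2(M,A)$ to be $u$-$S$-torsion---so you are implicitly using the full equivalence with condition~(4) of Theorem~\ref{s-inj-ext} (all $\Ext^n$), not just the $\Ext^1$ version you cite at the outset.
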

\begin{proof}
$(1)$ Suppose $E_1,...,E_n$ are  $u$-$S$-injective modules. Let $M$ be an $R$-module. Then there exists  $s_i\in S$ such that  $s_i\Ext_R^1(M,E_i)=0$ for each $i=1,...,n$. Set $s=s_1...s_n$. Then $s\Ext_R^1(M,\bigoplus\limits_{i=1}^n E_i)\cong \bigoplus\limits_{i=1}^ns\Ext_R^1(M, E_i)=0$. Thus $\bigoplus\limits_{i=1}^n E_i$ is  $u$-$S$-injective.

$(2)$ Suppose  $A$ and $C$ are  $u$-$S$-injective modules and   $0\rightarrow A\xrightarrow{f} B\xrightarrow{g} C\rightarrow 0$  is a $u$-$S$-exact sequence. Then there are three short exact sequences: $0\rightarrow \Ker(f)\rightarrow A\rightarrow \Im(f)\rightarrow 0$, $0\rightarrow \Ker(g)\rightarrow B\rightarrow \Im(g)\rightarrow 0$ and $0\rightarrow \Im(g)\rightarrow C\rightarrow \Coker(g)\rightarrow 0$. Then  $\Ker(f)$ and $\Coker(g)$ are all $u$-$S$-torsion  and $s\Ker(g)\subseteq \Im(f)$ and $s\Im(f)\subseteq \Ker(g)$ for some $s\in S$.  Let $M$ be an $R$-module.  Then $$ \Ext_R^1(M,A)\rightarrow \Ext_R^1(M,\Im(f))\rightarrow \Ext_R^2(M,\Ker(f))$$ is exact. Since $\Ker(f)$ is $u$-$S$-torsion and $A$ is  $u$-$S$-injective,  $\Ext_R^1(M,\Im(f))$ is $u$-$S$-torsion.
Note $$\Hom_R(M,\Coker(g))\rightarrow \Ext_R^1(M,\Im(g))\rightarrow \Ext_R^1(M,C)$$  is exact. Since $\Coker(g)$ is   $u$-$S$-torsion,  $\Hom_R(M,\Coker(g))$ is  $u$-$S$-torsion by Lemma \ref{u-S-tor-ext}. Thus $\Ext_R^1(M,\Im(g))$ is  $u$-$S$-torsion as $\Ext_R^1(M,C)$ is $u$-$S$-torsion. We also note that
$$\Ext_R^1(M,\Ker(g))\rightarrow \Ext_R^1(M,B) \rightarrow \Ext_R^1(M,\Im(g))$$ is exact. Thus to verify that $\Ext_R^1(M,B)$ is $u$-$S$-torsion, we just need to show $\Ext_R^1(M,\Ker(g))$ is $u$-$S$-torsion. Denote $N= \Ker(g)+\Im(f)$.
Consider the following two exact sequences
\begin{center}
$0\rightarrow \Ker(g)\rightarrow N\rightarrow N/\Ker(g)\rightarrow 0$ and $0\rightarrow \Im(f)\rightarrow N\rightarrow  N/\Im(f)\rightarrow 0.$
\end{center}
Then it is easy to verify $N/\Ker(g)$ and $N/\Im(f)$ are all  $u$-$S$-torsion. Consider the following induced two exact sequences
$$\Hom_R(M,N/\Im(f))\rightarrow \Ext_R^1(M,\Ker(g)) \rightarrow \Ext_R^1(M,N)  \rightarrow \Ext_R^1(M, N/\Im(f)),$$ $$\Hom_R(M,N/\Ker(g)) \rightarrow \Ext_R^1(M,\Im(f)) \rightarrow \Ext_R^1(M,N) \rightarrow \Ext_R^1(M, N/\Ker(g)).$$ Thus $\Ext_R^1(M,\Ker(g))$ is $u$-$S$-torsion if and only if  $\Ext_R^1(M,\Im(f))$ is $u$-$S$-torsion. Consequently,  $B$ is  $u$-$S$-injective since $\Ext_R^1(M,\Im(f))$ is $u$-$S$-torsion.

$(3)$ Considering the $u$-$S$-exact sequences $0\rightarrow A\rightarrow B\rightarrow 0\rightarrow 0$ and  $0 \rightarrow 0\rightarrow  A\rightarrow B\rightarrow 0$, we have $A$ is $u$-$S$-injective if and only if  $B$ is  $u$-$S$-injective by $(2)$.

(4) Suppose $0\rightarrow A\xrightarrow{f} B\xrightarrow{g} C\rightarrow 0$  is a $u$-$S$-exact sequence. Then, as in the proof of $(3)$, there are three short exact sequences: $0\rightarrow \Ker(f)\rightarrow A\rightarrow \Im(f)\rightarrow 0$, $0\rightarrow \Ker(g)\rightarrow B\rightarrow \Im(g)\rightarrow 0$ and $0\rightarrow \Im(g)\rightarrow C\rightarrow \Coker(g)\rightarrow 0$. Then  $\Ker(f)$ and $\Coker(g)$ are all $u$-$S$-torsion  and $s\Ker(g)\subseteq \Im(f)$ and $s\Im(f)\subseteq \Ker(g)$ for some $s\in S$.
Let $M$ be an $R$-module. Note that $$\Hom_R(M,\Coker(g))\rightarrow\Ext_R^1(M,\Im(g))\rightarrow\Ext_R^1(M,C)\rightarrow \Ext_R^1(M,\Coker(g)) $$ is exact. Since $\Coker(g)$ is $u$-$S$-torsion, we have $\Hom_R(M,\Coker(g)$  and  $\Ext_R^1(M,\Coker(g))$  are  $u$-$S$-torsion by  Lemma \ref{u-S-tor-ext}. We just need to verify $\Ext_R^1(M,\Im(g))$ is  $u$-$S$-torsion.  Note that $$\Ext_R^1(M,B)\rightarrow \Ext_R^1(M,\Im(g)) \rightarrow \Ext_R^2(M,\Ker(g))$$ is exact. Since $\Ext_R^1(M,B)$ is $u$-$S$-torsion, we just need to verify that $\Ext_R^2(M,\Ker(g))$ is $u$-$S$-torsion. By the proof of $(2)$, we just need to show that $\Ext_R^2(M,\Im(f))$ is  $u$-$S$-torsion.  Note that $$ \Ext_R^2(M,A)\rightarrow \Ext_R^2(M,\Im(f))\rightarrow \Ext_R^3(M,\Ker(f))$$ is exact. Since $\Ext_R^2(M,A)$ and $\Ext_R^3(M,\Ker(f))$  is $u$-$S$-torsion, we have  $\Ext_R^2(M,\Im(f))$ is $u$-$S$-torsion. So $C$ is $u$-$S$-injective.
\end{proof}

Let $\p$ be a prime ideal of $R$. We say an $R$-module $E$ is \emph{$u$-$\p$-injective} shortly provided that  $E$ is  $u$-$(R\setminus\p)$-injective. The next result gives a local characterization of injective modules.
\begin{proposition}\label{s-injective-loc-char}
Let $R$ be a ring and $E$ an $R$-module. Then the following statements are equivalent:
 \begin{enumerate}
\item  $E$ is injective;
\item   $E$ is    $u$-$\p$-injective for any $\p\in \Spec(R)$;
\item   $E$ is   $u$-$\m$-injective for any $\m\in \Max(R)$.
 \end{enumerate}
\end{proposition}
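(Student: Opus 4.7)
The plan is to follow the same template as Proposition \ref{s-noe-m-loc-char}, replacing the module-theoretic argument with a cohomological one based on Theorem \ref{s-inj-ext}. The implications $(1)\Rightarrow(2)\Rightarrow(3)$ are essentially formal. If $E$ is injective then $\Ext_R^1(M,E)=0$ for every $R$-module $M$, which is a fortiori $u$-$(R\setminus\p)$-torsion for any prime ideal $\p$, so Theorem \ref{s-inj-ext} gives $u$-$\p$-injectivity. Since every maximal ideal is prime, $(2)\Rightarrow(3)$ is immediate.

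The content lies in $(3)\Rightarrow(1)$. Fix an $R$-module $M$. By Theorem \ref{s-inj-ext} applied with $S=R\setminus\m$, the hypothesis yields for each $\m\in\Max(R)$ an element $s^{\m}\in R\setminus\m$ such that $s^{\m}\Ext_R^1(M,E)=0$; note that $s^{\m}$ is allowed to depend on $M$, which is all we need. The family $\{s^{\m}\mid \m\in\Max(R)\}$ is contained in no maximal ideal, so the ideal it generates must be all of $R$. Consequently there exist finitely many maximal ideals $\m_1,\dots,\m_n$ and elements $r_1,\dots,r_n\in R$ such that $\sum_{i=1}^n r_i s^{\m_i}=1$. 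For any $x\in\Ext_R^1(M,E)$ we then have $x=\sum_{i=1}^n r_i s^{\m_i}x=0$, so $\Ext_R^1(M,E)=0$. Since $M$ was arbitrary, $E$ is injective by Theorem \ref{s-inj-ext}.

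There is no real obstacle beyond bookkeeping: the argument is the exact analogue of the classical ``element that is a unit locally at every maximal ideal is a unit,'' transplanted to annihilators of $\Ext^1(M,E)$. The only point worth flagging is that one must invoke the characterization $(3)$ of Theorem \ref{s-inj-ext} (vanishing of $\Ext^1$ up to $u$-$S$-torsion), not the original definition, since the latter would force us to track how the witness $s$ varies with an arbitrary $u$-$S$-exact sequence rather than with a single module $M$.
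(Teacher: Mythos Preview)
Your proof is correct and follows essentially the same approach as the paper: both arguments invoke Theorem \ref{s-inj-ext} to reduce $u$-$\m$-injectivity to the statement that $\Ext_R^1(M,E)$ is annihilated by some $s^{\m}\in R\setminus\m$, then use that the collection $\{s^{\m}\}$ generates the unit ideal to conclude $\Ext_R^1(M,E)=0$. Your write-up is in fact a bit more careful in spelling out the finite-sum argument and in flagging that $s^{\m}$ may depend on $M$, but the underlying strategy is identical to the paper's.
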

\begin{proof} $(1)\Rightarrow (2)$ It follows from Theorem \ref{s-inj-prop}.

$(2)\Rightarrow (3):$  Trivial.

 $(3)\Rightarrow (1):$ Let $M$ be an $R$-module. Then $\Ext_R^1(M,E)$ is  $u$-$(R-\m)$-torsion. Thus for any $\m\in \Max(R)$, there exists  $s_{\m}\in S$ such that $s_{\m}\Ext_R^1(M,E)=0$. Since the ideal generated by all $s_{\m}$ is $R$, $\Ext_R^1(M,E)=0$. So $E$ is injective.
\end{proof}

We say an $R$-module $M$ is $S$-divisible is $M=sM$ for any $s\in S$. The well known Baer's Criterion states that an $R$-module $E$ is injective if and only if $\Ext_R^1(R/I,E)=0$ for any ideal $I$ of $R$. The next result gives a  uniformly $S$-version of Baer's Criterion.
\begin{proposition}\label{s-inj-baer}{\bf (Baer's Criterion for $u$-$S$-injective modules)}
Let $R$ be a ring, $S$ a multiplicative subset of $R$ and $E$ an  $R$-module. If $E$ is a $u$-$S$-injective module then there exists an element $s\in S$ such that $s\Ext_R^1(R/I,E)=0$   for any ideal $I$ of $R$. Moreover, if $E=sE$, then the converse also holds.
\end{proposition}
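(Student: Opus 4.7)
The plan is to prove the two implications separately. The necessity direction is a direct consequence of Theorem~\ref{s-inj-ext} combined with the standard behaviour of $\Ext^1$ on direct sums, while the converse requires a $u$-$S$-version of the classical Baer argument in which the hypothesis $E=sE$ plays an essential role.

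For the necessity, I would form the single large test module $M=\bigoplus_{I} R/I$, where $I$ ranges over all ideals of $R$. By Theorem~\ref{s-inj-ext}(3) applied to $E$, there exists $s\in S$ with $s\Ext_R^1(M,E)=0$. The natural isomorphism
\[
\Ext_R^1\Bigl(\bigoplus_{I} R/I,\,E\Bigr)\;\cong\;\prod_{I}\Ext_R^1(R/I,E),
\]
which follows by taking a direct sum of projective resolutions of the summands term by term and using that $\Hom_R(-,E)$ converts direct sums into direct products, then forces this same $s$ to annihilate each factor $\Ext_R^1(R/I,E)$ simultaneously.

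For the converse, assume $E=sE$ and $s\Ext_R^1(R/I,E)=0$ for every ideal $I$. By Theorem~\ref{s-inj-ext}(3), it suffices to exhibit, for each $R$-module $M$, some element of $S$ annihilating $\Ext_R^1(M,E)$. The plan is a Baer-type Zorn's lemma argument: given an embedding $A\hookrightarrow B$ and a homomorphism $f:A\to E$, the goal is to extend some fixed power $s^kf$ to a map $B\to E$. Ordering pairs $(A',g')$ with $A\subseteq A'\subseteq B$ and $g'|_A=sf$ by inclusion and extension and taking a maximal element $(A_0,g_0)$, at the inductive step one picks $b\in B\setminus A_0$, forms the ideal $I=\{r\in R:rb\in A_0\}$, and the induced map $h(r)=g_0(rb)$. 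The hypothesis yields $h^+:R\to E$ with $h^+|_I=sh$, and the formula $g_1(a+rb)=sg_0(a)+h^+(r)$ produces a well-defined extension on $A_0+Rb$ (well-definedness follows from $h^+|_I=sh$ together with $h(r)=g_0(rb)$).

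The principal obstacle is that the map $g_1$ restricts to $sg_0$ rather than $g_0$ on $A_0$, so iterating this step naively would inflate the accumulated power of $s$ without bound. This is precisely where the assumption $E=sE$ is used: for any $h^+\in\Hom_R(R,E)\cong E$, one can select $e_1\in E$ with $se_1=h^+(1)$ and set $\tilde h(r)=re_1$, producing $\tilde h\in\Hom_R(R,E)$ with $s\tilde h=h^+$. This ``division by $s$'' device lets one renormalize after each Zorn step so that the accumulated $s$-power remains bounded (by $s^2$, say), eventually yielding an extension of $s^2f$ to all of $B$. Combined with a long-exact-sequence/filtration argument passing from short exact sequences with cyclic cokernel to arbitrary modules via direct limits, this shows $\Ext_R^1(M,E)$ is $u$-$S$-torsion for every $M$, completing the converse through Theorem~\ref{s-inj-ext}.
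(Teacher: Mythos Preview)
Your treatment of the necessity is correct and matches the paper exactly.

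For the converse, the Zorn setup is also the paper's, and you correctly isolate the obstacle: the naive extension $g_1(a+rb)=sg_0(a)+h^+(r)$ restricts to $sg_0$ on $A_0$, so $(A_0+Rb,g_1)\notin\Gamma$. But your fix fails. If $\tilde h:R\to E$ satisfies $s\tilde h=h^+$, then on $I$ one only gets $s\,\tilde h|_I=h^+|_I=sh$, hence $s(\tilde h|_I-h)=0$; without $E[s]=0$ this does \emph{not} force $\tilde h|_I=h$. Attempting $g_1'(a+rb)=g_0(a)+\tilde h(r)$, the well-definedness check at $a+rb=0$ yields only $s\bigl(g_0(a)+\tilde h(r)\bigr)=0$, not zero. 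So the new pair does not lie in $\Gamma$, and since each pass of the inductive step multiplies the restriction by $s$, no fixed power $s^k$ bounds the accumulation across a transfinite Zorn argument. The appended ``filtration/direct limit'' passage is both unsubstantiated and, once the Zorn step is done correctly, unnecessary.

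The paper resolves the obstacle by reversing the order in which $E=sE$ and the $\Ext$-hypothesis are invoked. With $h(r)=g_0(rb)$ as above, the paper first uses $E=sE$ to produce a homomorphism $h':I\to E$ with $sh'=h$, and only then applies $s\Ext^1_R(R/I,E)=0$ to $h'$, obtaining $g:R\to E$ with $g|_I=sh'=h$. Since now $g(r)=g_0(rb)$ exactly for $r\in I$, the formula $g_1(a+rb)=g_0(a)+g(r)$ is well-defined and satisfies $g_1|_{A_0}=g_0$; hence $(A_0+Rb,g_1)\in\Gamma$, contradicting maximality. One Zorn argument thus shows that $sf$ extends to $B$ for every inclusion $A\subseteq B$ and every $f$, and Theorem~\ref{s-inj-ext}(2) gives $u$-$S$-injectivity directly.
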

\begin{proof}

If $E$ is a $u$-$S$-injective module, then   $\Ext_R^1(\bigoplus\limits_{I\unlhd R}R/I,E)$ is  $u$-$S$-torsion by Theorem \ref{s-inj-ext}. Thus there is an element $s\in S$ such that $s\Ext_R^1(\bigoplus\limits_{I\unlhd R}R/I,E)=s\prod\limits_{I\unlhd R}\Ext_R^1(R/I,E)=0$.  So $s\Ext_R^1(R/I,E)=0$   for any ideal $I$ of $R$.

Suppose  $E$ is an $S$-divisible $R$-module. Let $B$ be an $R$-module, $A$ a submodule of $B$ and $s$  an element in  $S$ satisfying the necessity. Let $f:A\rightarrow E$ be an $R$-homomorphism. Set
\begin{center}
 $\Gamma=\{(C,d)|C$ is a submodule of $B$ containing $A$ and $d|_A=sf\}.$
\end{center}
Since $(A,sf)\in \Gamma$, $\Gamma$ is nonempty. Set $(C_1,d_1)\leq (C_2,d_2)$ if and only if $C_1\subseteq C_2$ and $d_2|_{C_1}=d_1$. Then $\Gamma$ is a partial order. For any chain $(C_j,d_j)$, let $C_0=\bigcup\limits_{j}C_j$ and $d_0(c)=d_j(c)$ if $c\in C_j$. Then $(C_0,d_0)$ is the upper bound of the chain $(C_j,d_j)$. By Zorn's Lemma,  there is a maximal element $(C,d)$  in $\Gamma$.

We claim that $C=B$. On the contrary, let $x\in B-C$. Denote $I=\{r\in R|rx\in C\}$. Then $I$ is an ideal of $R$. Since $E=sE$, there exists a homomorphism $h:I\rightarrow E$ satisfy that $sh(r)=d(rx)$. Then there is an $R$-homomorphism $g: R\rightarrow E$ such that $g(r)=sh(r)=d(rx)$ for any $r\in I$. Let $C_1=C+Rx$ and $d_1(c+rx)=d(c)+g(r)$ where $c\in C$ and $r\in R$. If  $c+rx=0$, then $r\in I$ and thus $d(c)+g(r)=d(c)+sh(r)=d(c)+d(rx)=d(c+rx)=0$. Hence $d_1$ is a well-defined homomorphism such that $d_1|_A=sf$.  So $(C_1,d_1)\in  \Gamma$. However, $(C_1,d_1)> (C,d)$ which contradicts the maximality of $(C,d)$.
\end{proof}

 Now, we give the main result of this section.

\begin{theorem}\label{s-injective-ext} {\bf (Cartan-Eilenberg-Bass Theorem for uniformly $S$-Noetherian rings)}
Let $R$ be a ring, $S$ a regular multiplicative subset of $R$. Then the following assertions are equivalent:
\begin{enumerate}
\item  $R$ is  $u$-$S$-Noetherian;
\item  any direct sum of injective modules is $u$-$S$-injective;
\item  any direct union of injective modules is $u$-$S$-injective.
\end{enumerate}
\end{theorem}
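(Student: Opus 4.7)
The plan is to prove the cyclic chain of implications $(1)\Rightarrow(2)\Rightarrow(3)\Rightarrow(1)$, using Baer's criterion for $u$-$S$-injective modules (Proposition \ref{s-inj-baer}) and the fact that finite direct sums of injectives are injective as the main technical tools, plus the observation that the hypothesis ``$S$ regular'' forces every injective module to be $s$-divisible for every $s\in S$.

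For $(1)\Rightarrow(2)$, I would let $\{E_i\}_{i\in I}$ be a family of injective modules and set $E=\bigoplus_{i\in I}E_i$. Using the $u$-$S$-Noetherian hypothesis I fix a single $s\in S$ such that every ideal $J\trianglelefteq R$ has a finitely generated sub-ideal $K$ with $sJ\subseteq K$. For any $f\in\Hom_R(J,E)$ the image $f(K)$ is finitely generated, hence lives in a finite sub-sum $\bigoplus_{i\in F}E_i$, which is injective; since $sf(J)=f(sJ)\subseteq f(K)$, the map $sf$ factors through this finite sub-sum and therefore extends to $R$. This gives $s\Ext^1_R(R/J,E)=0$ for every ideal $J$. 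Since $S$ consists of non-zero-divisors, each injective $E_i$ is $s$-divisible (extend $sR\to E_i,\ sr\mapsto ry$ along $sR\hookrightarrow R$), hence so is $E$. Baer's criterion then yields that $E$ is $u$-$S$-injective.

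For $(2)\Rightarrow(3)$, given an ascending chain $E_1\subseteq E_2\subseteq\cdots$ of injective modules, each $E_{n-1}$ splits off $E_n$, so $E_n = E_{n-1}\oplus F_n$ with $F_n$ injective. Iterating gives $\bigcup_n E_n \cong E_1\oplus\bigoplus_{n\ge 2}F_n$, a direct sum of injectives, hence $u$-$S$-injective by $(2)$. The same pattern, combined with transfinite induction along a cofinal well-ordered subchain, covers an arbitrary directed union of injective modules.

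For $(3)\Rightarrow(1)$, the main step, I would exploit a single universal direct union. Let $\{I_\alpha\}_{\alpha\in A}$ enumerate all ideals of $R$ and set $E=\bigoplus_{\alpha}E(R/I_\alpha)$. As the (directed) union of its finite sub-sums, each of which is injective, $E$ is a direct union of injective modules, so by $(3)$ there is a \emph{uniform} $s_0\in S$ with $s_0\Ext^1_R(M,E)=0$ for every $R$-module $M$. For any ascending chain $J_1\subseteq J_2\subseteq\cdots$ with union $J_\infty$, I would define $f\colon J_\infty\to E$ by sending $a$ to the tuple whose $J_n$-coordinate is the class of $a$ in $R/J_n\subseteq E(R/J_n)$ (and zero elsewhere); this is well defined because $a\in J_n$ for $n$ large, making $f(a)$ finitely supported. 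Since $E$ is $u$-$S$-injective, $s_0f$ extends to some $\tilde g\colon R\to E$, and $\tilde g(1)\in E$ has finite support, so there exists $N$ for which its $J_n$-coordinate vanishes whenever $n\geq N$. Comparing coordinates in $\tilde g(a)=a\tilde g(1)=s_0f(a)$ at position $J_n$ for $n\geq N$ forces $s_0\bar a=0$ in $E(R/J_n)\supseteq R/J_n$, i.e.\ $s_0 a\in J_n$. Hence $s_0 J_m\subseteq J_N$ for all $m\geq N$, so every ascending chain of ideals is stationary with respect to the \emph{same} element $s_0$, and Corollary \ref{u-s-noe-ring-char} gives $u$-$S$-Noetherianness. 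The main obstacle is precisely producing this uniform $s_0$, which is why all injective hulls $E(R/I)$ must be packaged into one universal direct union before $(3)$ is invoked; a secondary subtlety is that when $A$ is uncountable one must regard this union through a transfinite well-ordered cofinal chain of its finite sub-sums.
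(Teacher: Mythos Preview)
Your cycle runs $(1)\Rightarrow(2)\Rightarrow(3)\Rightarrow(1)$, whereas the paper's runs $(1)\Rightarrow(3)\Rightarrow(2)\Rightarrow(1)$; the difference matters, because $(2)\Rightarrow(3)$ is the link that does not go through directly. Your $(1)\Rightarrow(2)$ is correct and in fact cleaner than the paper's $(1)\Rightarrow(3)$: instead of invoking commutation of $\Ext_R^1(R/K,-)$ with the direct limit for $K$ finitely generated, you note that the finitely generated image $f(K)$ lands in a single injective summand, so $sf$ extends. The same argument works verbatim for an arbitrary direct union (replace ``finite sub-sum'' by ``some $E_i$''), so you actually already have $(1)\Rightarrow(3)$ in hand.

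The genuine gap is in $(2)\Rightarrow(3)$. For a countable chain the successive splitting $E_n=E_{n-1}\oplus F_n$ does give $\bigcup_n E_n\cong\bigoplus_n F_n$. But the transfinite step fails at a limit ordinal $\lambda$: the partial union $\bigcup_{\alpha<\lambda}E_{i_\alpha}$ need not be injective, so there is no reason it splits off from $E_{i_\lambda}$, and the decomposition cannot be continued past $\lambda$. No compatible system of complements at successor stages repairs this; even in the classical Bass--Papp theorem the passage from direct sums to arbitrary direct limits of injectives goes through the Noetherian condition rather than by a direct module-theoretic argument. The fix is simply to reverse the order: use your argument to get $(1)\Rightarrow(3)$, take $(3)\Rightarrow(2)$ for free since a direct sum is the direct union of its finite sub-sums, and then run your $(3)\Rightarrow(1)$ as $(2)\Rightarrow(1)$ with the ``view as a direct union'' wrapper deleted. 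That is exactly the paper's organisation.

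One small correction in your $(3)\Rightarrow(1)$: $u$-$S$-injectivity of $E$ does not give a single $s_0$ with $s_0\Ext_R^1(M,E)=0$ for \emph{all} modules $M$; the witness may depend on $M$. What Proposition~\ref{s-inj-baer} does supply is a single $s_0$ with $s_0\Ext_R^1(R/I,E)=0$ for every ideal $I$, and since you only use this with $I=J_\infty$ your argument goes through once the claim is restated. Your device of packaging all $E(R/I_\alpha)$ into one fixed $E$ before invoking the hypothesis is exactly what makes the uniformity of $s_0$ over all chains transparent; this is essentially the paper's $(2)\Rightarrow(1)$, with the uniformity spelled out more carefully.
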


\begin{proof} $(1)\Rightarrow(3):$ Let $\{E_i,f_{i,j}\}_{i<j\in \Lambda}$ be a direct system of injective modules where each $f_{i,j}$ is the embedding map. Let $\lim\limits_{\longrightarrow}{E_i}$ be its direct limit. Let $s$ be an element in $S$ such that for any ideal $I$ of $R$ there exists a finitely generated sub-ideal $K$ of $I$ such that $sI\subseteq K$. Considering the short exact sequence $0\rightarrow I/K\rightarrow R/K\rightarrow R/I\rightarrow 0$, we have the following long exact sequence: $$\Hom_R(I/K,\lim\limits_{\longrightarrow}{E_i})\rightarrow \Ext_R^1(R/I,\lim\limits_{\longrightarrow}{E_i})\rightarrow \Ext_R^1(R/K,\lim\limits_{\longrightarrow}{E_i})\rightarrow \Ext_R^1(I/K,\lim\limits_{\longrightarrow}{E_i}).$$ Since $R/K$ is finitely presented, we have $\Ext_R^1(R/K,\lim\limits_{\longrightarrow}{E_i})\cong \lim\limits_{\longrightarrow}\Ext_R^1(R/K,{E_i})=0$ by the Five Lemma and \cite[Theorem 24.10]{w}. By the proof of  Lemma \ref{u-S-tor-ext}, one can show that $s\Hom_R(I/K,\lim\limits_{\longrightarrow}{E_i})=0$. Thus $s\Ext_R^1(R/I,\lim\limits_{\longrightarrow}{E_i})=0$ for any ideal $I$ of $R$. Since $S$ is  composed of non-zero-divisors, each $E_i$ is $S$-divisible by the proof \cite[Theorem 2.4.5]{fk16}. Thus $\lim\limits_{\longrightarrow}{E_i}$ is also $S$-divisible. So $\lim\limits_{\longrightarrow}{E_i}$ is  $u$-$S$-injective by Proposition \ref{s-inj-baer}.

$(3)\Rightarrow(2):$  Trivial.

$(2)\Rightarrow(1):$ Assume $R$ is not a $u$-$S$-Noetherian ring. By Theorem \ref{u-s-noe-char}, for any  $s\in S$, there exists a strictly ascending chain $I_1\subset I_2\subset ...$ of ideals of $R$ such that for any $k\geq 1$ there is $n\geq k$ satisfying $sI_n\not\subseteq I_k$. Set $I=\bigcup\limits_{i=1}^{\infty}I_i$. Then $I$ is an ideal of $R$ and $I/I_i\not=0$ for any $i\geq 1$. Denote by $E(I/I_i)$ the injective envelope of $I/I_i$. Let $f_i$ be the natural composition $I\twoheadrightarrow I/I_i\rightarrowtail E(I/I_i)$. Since $sI_n\not\subseteq I_i$ for any $i\geq 1$ and some $n\geq i$, we have $sf_i\not=0$ for any $i\geq 1$. We define $f:I\rightarrow \bigoplus_{i=1}^{\infty}E(I/I_i)$ by $f(a)=(f_i(a))$. Not that for each $a\in I$, we have $a\in I_i$ for some $i\geq 1$. So $f$ is a well-defined $R$-homomorphism. Let $\pi_i:\bigoplus_{i=1}^{\infty}E(I/I_i)\twoheadrightarrow E(I/I_i)$ be the projection. The embedding map $i: I\rightarrow R$ induces an exact sequence $$\Hom_R(R,\bigoplus_{i=1}^{\infty}E(I/I_i))\xrightarrow{i^{\ast}} \Hom_R(I,\bigoplus_{i=1}^{\infty}E(I/I_i))\xrightarrow{\delta} \Ext_R^1(R/I,\bigoplus_{i=1}^{\infty}E(I/I_i))\rightarrow 0.$$Since $\bigoplus_{i=1}^{\infty}E(I/I_i)$ is $u$-$S$-injective, there is an $s\in S$ such that $$s\Ext_R^1(R/I,\bigoplus_{i=1}^{\infty}E(I/I_i))=0.$$
Thus there exists a homomorphism $g:R\rightarrow \bigoplus_{i=1}^{\infty}E(I/I_i)$ such that $sf=i^{\ast}(g)$.
Thus for sufficiently large $i$, we have $s\pi_if(a)=\pi_ii^{\ast}(g)(a)=a\pi_ii^{\ast}(g)(1)=0$ for any $a\in I$.
So for such $i$, $sf_i=s\pi_if:I\rightarrow E(I/I_i)$ is  a zero homomorphism, which is a contradiction. Hence $R$ is  $u$-$S$-Noetherian.
\end{proof}


\begin{acknowledgement}\quad\\
The fifth author was supported by the National Natural Science Foundation of China (No. 12061001).
\end{acknowledgement}

\end{document}